\documentclass[11pt,twoside]{article}

\usepackage{amsmath}
\usepackage{amssymb}
\usepackage{amsfonts}
\usepackage{mathrsfs}
\usepackage{graphicx}
\usepackage{float}
\usepackage{needspace}
\usepackage{epsfig}
\usepackage{accents}
\usepackage{geometry}
\usepackage[pagewise]{lineno}
\usepackage{amscd,amsmath,amssymb,amsthm,amsfonts,epsfig,graphics}
\usepackage{fancyhdr}
\usepackage{tikz}
\usepackage{pgf,pgfplots}
\usepackage{caption}
\usepackage{titlesec}
\usepackage{setspace}

\usetikzlibrary{calc,arrows.meta}

\usepackage{indentfirst}
\usepackage[skins]{tcolorbox}
\tcbuselibrary{skins, breakable}

\usepackage{abstract}

\usepackage[numbers, sort]{natbib}
\newcommand \footnoteONLYtext[1]
{
\let \mybackup \thefootnote
\let \thefootnote \relax
\footnotetext{#1}
\let \thefootnote \mybackup
\let \mybackup \imareallyundefinedcommand
}

\usepackage{enumitem}
\setenumerate[1]{itemsep=-4pt, partopsep=0pt, parsep=\parskip, topsep=5pt}
\setitemize[1]{itemsep=-4pt, partopsep=0pt, parsep=\parskip, topsep=5pt}
\setdescription{itemsep=-4pt, partopsep=0pt, parsep=\parskip, topsep=5pt}

\allowdisplaybreaks

\newtheorem{definition}{Definition}[section]
\newtheorem{claim}{Claim}
\theoremstyle{plain}
\newtheorem{thm}{Theorem}
\newtheorem{lem}{Lemma}[section]

\newtheorem{fact}{Fact}[section]
\newtheorem{prop}{Proposition}
\newtheorem{remark}{Remark}

\newtheorem*{nonametheorem}{Main Theorem}

\numberwithin{equation}{section}
\numberwithin{figure}{section}
\numberwithin{table}{section}

\newcommand\keywords[1]{\it{Keywords}: #1}
\newcommand\msc[1]{\it{2010 Mathematics Subject Classification}:#1}
\makeatletter
\renewcommand{\section}{\@startsection{section}{1}{0mm}
{-\baselineskip}{0.5\baselineskip}{\normalsize\bf\leftline}}
\makeatother
\makeatletter
\renewcommand{\subsection}{\@startsection{subsection}{1}{0mm}
{-\baselineskip}{0.5\baselineskip}{\normalsize\bf\leftline}}
\makeatother

\tikzset{
	my rectangle/.pic={
		\draw (0,-0.07) rectangle (0.8,0.07);
	}
}
\tikzset{
	my draw/.pic={
		\draw (0,0) -- (4,0);
		\draw (0,-0.1) -- (0,0.1);
		\draw (4,-0.1) -- (4,0.1);
		\draw[dashed] (4,0) -- (5.2,0);
		\draw (1.4,-0.07) rectangle (2.6,0.07);
		\fill (2,0) circle (0.07);
	}
}
\tikzset{
	my square/.pic={
		\draw (-2,-2) rectangle (2,2);
		\draw[dotted] (-2,-2) -- (2,2);
		\draw (-1.5,-2.1) -- (-1.5,-1.9);
		\fill (-0.75,-2)node[shift={(-90:10pt)}] {$c$} circle (1.5pt);
		\draw (0,-2.1)node[shift={(-90:7pt)}] {$p$} -- (0,-1.9);
		\fill (0.75,-2)node[shift={(-90:10pt)}] {$d$} circle (1.5pt);
		\draw (1.5,-2.1) -- (1.5,-1.9);
	}
}
\tikzset{
	my squ/.pic={
		\draw (-1.6,-1.6) rectangle (1.6,1.6);
		\draw[dotted] (-1.6,-1.6) -- (1.6,1.6);
		\draw[dotted] (-1.4,-1.6) -- (-1.4,1.4) -- (1.4,1.4) -- (1.4,-1.6);
		\draw[dotted] (-1.4,-1.4) -- (1.4,-1.4);
		\draw[dotted] (-0.2,1.4) -- (-0.2,-1.6);
		\draw[dotted] (0.2,1.4) -- (0.2,-1.6);
		\draw[dotted] (-1.4,0.2) -- (1.4,0.2);
		\draw[dotted] (-1.4,-0.2) -- (1.4,-0.2);
		\fill (-0.8,-1.6)node[shift={(-90:9pt)}] {$d$} circle (1pt);
		\fill (0.8,-1.6)node[shift={(-90:8pt)}] {$c$} circle (1pt);
	}
}
\tikzset{
	my arc11/.pic={
		\draw plot [smooth, tension=1.2] coordinates {(-0.2,0) (0,1) (0.2,0)};
	}
}
\tikzset{
	my arc12/.pic={
		\draw plot [smooth, tension=1.2] coordinates {(-0.2,0) (0,-1) (0.2,0)};
	}
}
\tikzset{
	my arc21/.pic={
		\clip (-0.6,0.2) rectangle (0,1.4);
		\draw plot [smooth, tension=1.2] coordinates {(-0.6,0) (0,1.8) (0.6,0)};
	}
}
\tikzset{
	my arc22/.pic={
		\clip (0,0.2) rectangle (0.6,1.4);
		\draw plot [smooth, tension=1.2] coordinates {(-0.6,0) (0,1.8) (0.6,0)};
	}
}
\tikzset{
	my arc23/.pic={
		\clip (-0.6,-0.2) rectangle (0,-1.4);
		\draw plot [smooth, tension=1.2] coordinates {(-0.6,0) (0,-1.8) (0.6,0)};
	}
}
\tikzset{
	my arc24/.pic={
		\clip (0,-0.2) rectangle (0.6,-1.4);
		\draw plot [smooth, tension=1.2] coordinates {(-0.6,0) (0,-1.8) (0.6,0)};
	}
}
\title{\Large{\bf{{Wild attractor exists for symmetric Fibonacci bimodal map}}}}
\author{{Haoyang Ji }\footnotemark[1]\\
{\small{\rm School of Mathematics and Statistics, Zhengzhou University, Zhengzhou 450001, CHINA}}\\
{\small{E-mail: jihymath@zzu.edu.cn}}
}
\date{}
\begin{document}
\maketitle
\vspace{-2cm}
\begin{abstract}
\noindent{ \bf{Abstract.}} We consider smooth symmetric bimodal maps with Fibonacci combinatorics.
We prove that, when the critical order is sufficiently large, the common
$\omega$-limit set of the two critical points is a wild Cantor attractor.
Using a full-family argument, we first prove the existence of smooth symmetric
Fibonacci bimodal maps with any prescribed critical order $\ell>3$.
The proof of the existence of the wild attractor is based on a two-to-one
semi-conjugacy which reduces the generalized renormalization of the bimodal
map to a Fibonacci box mapping with one critical point.
\end{abstract}
\footnoteONLYtext{Date: \date{\today}}
\footnoteONLYtext{\msc{\rm{ 37E05}}}
\footnoteONLYtext{\keywords{\rm{Fibonacci combinatorics, generalized renormalization, semi-conjugacy, wild attractor, full family}}}
\footnoteONLYtext{H. Ji was supported by NSFC Grant No.12301103.}
\section{Introduction}
\label{intro}
Let $f:X\to X$ be a $C^3$ interval map with non-flat critical
points, where $X=[0,1]$. Following \cite{Mil}, a (minimal)
\emph{metric attractor} is a forward invariant compact set $A$
such that its basin
\[
\operatorname{Rel}(A)
=
\{x\in X:\omega(x)\subset A\}
\]
has positive Lebesgue measure, while no proper forward invariant
compact subset of $A$ has this property. By \cite{BL,vSV}, a metric
attractor of $f$ is either a periodic attractor, a cycle of periodic
intervals, or a Cantor set. In the last case, the Cantor set
coincides with the $\omega$-limit set of a recurrent critical point.
There are two types of Cantor attractors: \emph{solenoidal
attractors} and \emph{wild attractors}. A solenoidal attractor
corresponds to an infinitely renormalizable critical point, whereas
a Cantor metric attractor which is not solenoidal is called wild.
In particular, a wild attractor is not a topological attractor.
Milnor \cite{Mil} asked whether wild attractors can exist.

Milnor's attractor problem has been extensively studied, especially
for unimodal maps. For $C^3$ unimodal maps with negative Schwarzian
derivative and a quadratic critical point, it was proved in
\cite{GSS,Lyu} that decay of geometry property holds and hence such maps do
not have wild attractors. This was extended by Shen \cite{S} to
smooth unimodal maps with critical order $1<\ell\leq2$, and by
Li and Shen \cite{LS} to critical order
$\ell\leq2+\epsilon$, where $\epsilon>0$ is small. On the other
hand, for sufficiently large critical order, wild attractors may
exist. The existence was first established in \cite{BKNS} for
unimodal maps with Fibonacci combinatorics.

Fibonacci maps were first introduced by Branner and Hubbard
\cite{BH} in the study of cubic polynomials with one critical point
escaping to infinity, and by Hofbauer and Keller \cite{HK} in the
setting of unimodal maps with slow recurrence as candidates for maps
with wild attractors. The dynamics of Fibonacci interval maps has
 attracted a lot of attentions; see, for example,
\cite{BKNS,KN,Lyu}. It turns out that the metric
properties of Fibonacci unimodal maps depend strongly on the
critical order.

Bruin \cite{B} introduced a broad class of Fibonacci-like unimodal
maps described in terms of the kneading map. For further results on
kneading theory, Fibonacci-like combinatorics, and the associated
symbolic dynamics, see \cite{A,AC,B1,VY}, among others. Lyubich and
Milnor \cite{LM} first observed that renormalization methods can be
applied to Fibonacci unicritical maps, and the corresponding
renormalization theory was subsequently developed in
\cite{Bu,JL,NvS,Sm}.

From the viewpoint of generalized renormalization, Li and Wang
\cite{LW} introduced another class of Fibonacci-like unimodal maps
whose combinatorics need not satisfy Bruin's condition. They proved
that these maps admit no absolutely continuous invariant probability
measure when the critical order is sufficiently large. Note,
however, the absence of an absolutely continuous invariant
probability measure does not imply the existence of a wild
attractor. For extensions to more general combinatorics, see
\cite{JL2}. It was later proved in \cite{Z} that some of the maps
considered in \cite{LW} do have wild attractors for sufficiently
large critical order, using renormalization theory together with the
limit-drift argument introduced in \cite{LSw}.

For multimodal maps, the situation is much more complicated
and less is known. Blokh and Misiurewicz \cite{BM} showed that
the existence of wild attractors is closely related to persistent
recurrence of critical points. The authors of \cite{BKNS} also remarked that their methods could be
used to construct smooth multimodal interval maps with only quadratic
critical points and a wild Cantor attractor, although no detailed
construction was given there. Their method requires
sufficiently many critical points and critical relations,
with the orbit of one critical point passing through other critical
points.

In the bimodal setting, \cite{SV} gives a concrete example of a
non-renormalizable bimodal cubic polynomial with bounded geometry.
This naturally raises the question whether a bimodal cubic
polynomial can have a wild attractor. Vargas \cite{V} introduced
Fibonacci combinatorics for bimodal maps using the natural symmetry
of the two critical orbits and generalized renormalization. This was
further generalized in \cite{JM} to a wide class of Fibonacci-like
bimodal maps. It was also proved in \cite{JM} that cubic
Fibonacci-like polynomials have decay of geometry and hence do not
have wild attractors. The proof uses complex-analytic methods essentially and therefore does not extend to the
smooth case directly.

In this paper, we construct a concrete example of a symmetric
bimodal map with a wild attractor containing both critical points.
To the best of our knowledge, this is the first such example in the
bimodal setting, albeit under a rather restrictive symmetry
assumption. More precisely, we prove that a symmetric bimodal map
with Fibonacci combinatorics has a wild Cantor attractor provided
that the common critical order of its two critical points is
sufficiently large. We also prove the existence of such symmetric
Fibonacci bimodal maps by a modified full family argument.

Let us state our result more precisely. A $C^1$ map $f:[-1,1]\to[-1,1]$ is called \emph{bimodal} if $f(\{-1,1\})=\{-1,1\}$, $f$ has exactly two critical points $-1<d<c<1$, both are turning points, and $f$ is strictly monotone on each of the three intervals determined by $d$ and $c$. If $\{-1, 1\}$ are fixed, $f$ is called {\it positive}; if $\{-1, 1\}$ are interchanged, $f$ is called {\it negative}. A bimodal map $f$ is called {\it symmetric} if
it is odd, that is, $f(-x)=-f(x)$. 

Throughout the paper, we assume that $f$ is $C^3$ outside
$\{d,c\}$ and that both critical points are non-flat. More precisely,
a critical point $\widetilde c\in\{d,c\}$ is called to have
\emph{critical order} $\ell>1$ if there exist $C^3$
diffeomorphisms $\phi$ and $\psi$, defined in neighborhoods of $0$,
such that $\phi(0)=\widetilde c, \psi(0)=f(\widetilde c)$, and $\left|
\psi^{-1}\circ f\circ\phi(x)
\right|
=
|x|^\ell$ for all sufficiently small $|x|$.

\begin{nonametheorem}
There exists $\ell_0>3$ such that the following holds. Let $f:[-1,1]\to[-1,1]$ be a $C^3$ symmetric bimodal map with Fibonacci combinatorics and
two non-flat critical points $d<c$, both of critical order
$\ell\geq\ell_0$. Then
\[
A=\omega(c)=\omega(d)
\]
is a wild Cantor attractor of $f$.
\end{nonametheorem}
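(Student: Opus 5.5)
Since $f$ is odd, $f(-x)=-f(x)$, and the critical points satisfy $d=-c$. The involution $\sigma(x)=-x$ commutes with $f$, so the quotient map $x\mapsto |x|$ (equivalently the projection $\pi:[-1,1]\to[0,1]$, $\pi(x)=|x|$) semi-conjugates $f$ to a map $g$ on $[0,1]$, defined by $g(\pi(x))=\pi(f(x))$. This map is two-to-one away from $0$.

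The plan is to carry out the whole argument for the first return maps rather than for $f$ itself. I will not work with $g$ directly as an interval map, since it may be ill-behaved near $0$. Instead I apply the quotient at the level of generalized renormalization. For the Fibonacci combinatorics of Vargas, construct the principal nest of symmetric pairs of puzzle intervals $I_n\ni c$ and $-I_n\ni d$. Let $R_n$ be the first return map of $f$ to $I_n\cup(-I_n)$. Symmetry gives $R_n(-x)=-R_n(x)$. Fibonacci combinatorics should mean that $R_n$ has central branches on $I_n$ and $-I_n$ mapping over the next level in the pattern dictated by the Fibonacci kneading, with the two critical orbits interchanging or preserving components. After composing with $\pi$, the box mapping $\pi\circ R_n$ on the single interval $\pi(I_n)$ becomes a unicritical Fibonacci box mapping in the sense of Bruin–Keller–Nowicki–van Strien (BKNS) and Lyubich–Milnor. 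Concretely, the closest return times of $c$ to $I_n\cup -I_n$ follow the Fibonacci numbers $S_n$. The return $f^{S_n}$ restricted to the central domain is, up to the sign $\sigma$, a unimodal map with critical order $\ell$. Moreover, the Koebe-type distortion and real bounds transfer verbatim, because $\pi$ is an isometry on each component.

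Next I import the BKNS machinery for the reduced maps. For large $\ell$, the scaling ratios $|I_{n+1}|/|I_n|$ converge to a definite limit bounded away from $0$ (bounded geometry, no decay). This follows from the renormalization fixed-point or asymptotic analysis of Fibonacci maps as $\ell\to\infty$, e.g. via the limit map of Levin–Świątek and Bruin et al. With that in hand, the BKNS estimate applies. A point landing in $I_n$ and not following the critical orbit moves to a deeper level $I_{n+1}$ with probability at least $1/2+\delta$, and moves outward to shallower levels with smaller probability. This gives a random walk on the levels $n$ with drift toward $+\infty$. Because the semiconjugacy preserves Lebesgue measure up to a factor of $2$ on each side, the drift estimate lifts to $f$.

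Conclusions then follow. Lebesgue-almost every $x$ enters $\bigcup_n(I_n\cup -I_n)$ for arbitrarily deep $n$ and never escapes back, so $\omega(x)=\omega(c)$ for a positive-measure (indeed full-measure) set. Symmetry gives $\omega(d)=-\omega(c)$. The Fibonacci orbit of $c$ visits neighborhoods of $d$, so $\omega(c)=\omega(d)=:A$. The set $A$ is a minimal Cantor set because the combinatorics are persistently recurrent and non-renormalizable, and $f$ has no wandering intervals by non-flatness. Non-renormalizability also shows that $A$ is not solenoidal. Since its basin has positive measure while $A$ is nowhere dense and the generic point escapes (Fibonacci maps have full-measure basins only in the metric sense; topologically, generic points are dense), $A$ is a wild attractor.

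The main obstacle is the drift estimate, namely quantifying that escaping probabilities are less than $1/2$ uniformly in $n$ for $\ell\ge\ell_0$. I would first try to reproduce the BKNS asymptotic computation of $\pi\circ R_n$ directly, checking that the symmetric reduction yields exactly their Fibonacci box map with the same scaling limits.
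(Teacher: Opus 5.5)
Your proposal is correct in outline and follows essentially the same route as the paper. The paper likewise projects the first return map on the symmetric twin nest by $\pi(x)=|x|$ to a unicritical box map $g=\pi\circ\phi_1=-\phi_1$ on $V^1\cup C^1$, checks that $g$ is infinitely Fibonacci renormalizable with return times $S_k$, applies the BKNS theorem for such box mappings, and pulls the positive-measure basin back through $\pi^{-1}(A_g)=\omega_{\phi_1}(c)$ and $A=\omega_{\phi_1}(c)\cup f(\omega_{\phi_1}(c))$.

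The one correction concerns your parenthetical claims of a full-measure basin and of almost every $x$ going arbitrarily deep. This argument does not deliver them: the box map has escaping points, and other attractors are not excluded for a general $C^3$ map. They are not needed, though, since positive measure suffices.
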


The symmetry assumption plays an essential role in our approach. In
particular, it enables us to reduce the dynamics on the twin
principal nest to a box dynamics with a single critical point. The
existence of symmetric Fibonacci bimodal maps of every prescribed
critical order $\ell>3$ is established in Section~3 by a full-family
argument.

Let us briefly describe the main idea of the proof. We make use of
generalized renormalization as in \cite{JL2,JM}. By generalized
renormalization we mean to restrict the first return map on the twin
principal nest (for bimodal maps)  or principal nest (for unimodal maps), to return domains intersection the
critical orbits. Then we can obtain a suitable box mappings. Using the symmetry of $f$, we show that the first generalized
renormalization, which is a box mapping with two critical points, is
semi-conjugate, through a two-to-one projection $\pi$, to a box
mapping $g:I^1\cup J^1\to I^0$ with a unique critical point $c\in I^1$. We then show that $g$ has Fibonacci combinatorics. Thus
the generalized renormalization of the symmetric bimodal map is
reduced, through this semi-conjugacy, to that of a Fibonacci box
map.

For sufficiently large critical order, the known results imply that $A_g=\omega_g(c)$ is a minimal Cantor set whose basin has positive Lebesgue measure.
The semi-conjugacy allows us to pullback this positive-measure basin to the original bimodal maps. We then obtain $A=\omega_f(c)=\omega_f(d)$ as a minimal Cantor set with a positive-measure basin. Since the
Fibonacci combinatorics is non-renormalizable, $A$ is a wild Cantor
attractor of $f$.

\begin{remark}
Using the results of \cite{Z}, our method extends to more general Fibonacci-like combinatorics described by generalized renormalization. One can also define an analogue of Bruin's Fibonacci-like combinatorics for symmetric bimodal maps using kneading maps. Similar results may hold under an extra condition ensuring that both critical points belong to the same minimal Cantor set.
\end{remark}

This article is organized as follows. In Section~2, we review the
combinatorics of Fibonacci unimodal and bimodal maps, together with
the box mappings and generalized renormalizations that will be used. In Section~3, we prove the existence of symmetric Fibonacci
bimodal maps of every prescribed critical order $\ell>3$ by a
full-family argument. In Section~4, we construct the semi-conjugacy
described above and show that the generalized renormalizations of a
symmetric Fibonacci bimodal map give rise to a Fibonacci box map.
Finally, combining with known results on
Fibonacci box mappings, we prove the Main Theorem.

\section{Combinatorics}

In this section, we shall review some useful facts about Fibonacci combinatorics for unimodal and bimodal maps.

Throughout the paper, we use the following conventions. For two points
$a,b\in\mathbb{R}$, we denote by $(a,b)$ the open interval with endpoints
$a$ and $b$, regardless of their order. If $J$ and $J'$ are two intervals
in $\mathbb{R}$, we write $J<J'$ (respectively, $J\leq J'$) if
$y<y'$ (respectively, $y\leq y'$) for every $y\in J$ and $y'\in J'$.
The notation $a<J$, $a\leq J$, $J<a$, and $J\leq a$ is defined analogously
for $a\in\mathbb{R}$.

\subsection{Fibonacci combinatorics and box mappings}

A {\it unimodal} map is a continuous map \( f : [0, 1] \to [0, 1] \), $f(0) = f(1) = 0$, for which there exists a point \( c \in (0, 1) \) such that \( f|_{[0,c]} \) is strictly increasing and \( f|_{[c,1]} \) is strictly decreasing. The point \( c \) is called the \emph{critical point} (sometimes also called the \emph{turning point}), and for all \( i \geq 1 \) we set  
\( c_i = f^i(c). \)

Let \( f^n \) denote the \( n \)-th iterate of \( f \). Let \( J \subset [0, 1] \) be the maximal interval such that \( c \in \partial J \) and \( f^n|_{J} \) is monotone, then \( J \) is called a \emph{central branch} of \( f^n \). An iterate \( n \) is called a \emph{cutting time} if the image of the central branch of \( f^n \) contains \( c \). The cutting times are denoted by \( S_0, S_1, S_2, \dots \), where \( S_0 = 1 \) and \( S_1 = 2 \). Note that the difference between two consecutive cutting times is again a cutting time, see for example \cite{B1}. So we may define the following function
\[
Q: \mathbb{N} \to \mathbb{N} \cup \{0\},
\]
called the \emph{kneading map}, by
\[
S_k - S_{k-1} = S_{Q(k)}.
\]
For the sake of completeness set \( Q(0) = 0 \). If only finitely many cutting times exist, we set \( S_k = \infty \) and \( Q(k) = \infty \) for all sufficiently large \( k \).

\begin{definition}
A unimodal map $f$ is said to have \textbf{Fibonacci combinatorics} (or is called a \textbf{Fibonacci map}) if its kneading map satisfies $Q(k) = \max\{k-2, 0\}$ for all $k \ge 1$; equivalently, $S_k - S_{k-1} = S_{k-2}$ for every $k \ge 2$.
\end{definition}

It follows that the cutting times of a Fibonacci unimodal map \( S_0, S_1, S_2, \dots \) coincides with the Fibonacci sequence \(1, 2, 3, 5, \cdots \). Bruin's class of Fibonacci-like unimodal maps \cite{B} corresponds to the case $Q(k) \geq \max\{k-N, 0\}$, $N \geq 2$.

On the other hand, the combinatorics of a non-renormalizable unimodal map with a recurrent critical point can also be described in terms of the principal nest and generalized renormalizations \cite{Lyu}. In this sense, Fibonacci unimodal map has an equivalent description.

An open interval $T$ is called {\it nice} if $f^n(\partial T) \cap T = \emptyset$ for all $n \geq 0$. Let $D(T)=\{x\in [0,1]: f^k(x)\in T \mbox{ for some }k\geq 1\}$.
The {\it first entry map} $R_T: D(T)\to T$ is defined as $x \to f^{k(x)}(x)$, where $k(x)$ is the {\it entry time} of $x$ into $T$, i.e., the minimal positive integer such that $f^{k(x)}(x)\in T$. The map $R_T|(D(T)\cap T)$ is called the {\it first return map} of $T$. A component of $D(T)$ (resp. $D(T)\cap T$) is called an {\it entry domain} (resp. {\it return domain}) of $T$.

The {\it principal nest}
\begin{equation*}
I^0 \supset I^1 \supset I^2 \supset \ldots
\end{equation*}
of $f$ is defined as follows. First let $I^0 :=(\hat \alpha,\alpha)$, where $\alpha$ is the unique orientation reversing fixed point of $f$ and $f(\hat \alpha) = f(\alpha)$. Then for every $n \geq 1$, define $I^n$ be the return domain to $I^{n-1}$ which contains the critical point. All these intervals $I^n$ are nice. In case that $R_{I^n} : I^{n+1} \to I^{n}$ {\it non-central}, that is, $R_{I^n}(c) \notin I^{n+1}$, let $J^{n+1}$ denote the return domain to $I^n$ containing $R_{I^n}(c)$.

\begin{prop}
A unimodal map $f$ has Fibonacci combinatorics if and only if it satisfies the following properties:
\begin{enumerate}
\item[(1)] $f(c), f^2(c) \notin I^0$ and $f^3(c) \in I^0$; 
\item[(2)] for each $n \geq 1$, $J^n \neq I^n$ and $I^{n-1} \cap \omega(c) \subset I^n \cup J^n$;
\item[(3)] for each $n \geq 1$, $R_{I^{n-1}} |I^n = f^{S_{n+1}}$ and $R_{I^{n-1}} |J^n = f^{S_{n}}$;
\item[(4)] for each $n \geq 1$, $R_{I^{n-1}} (I^n) \supset I^n \ni c$.
\end{enumerate}

\end{prop}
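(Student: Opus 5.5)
The approach is to translate between the kneading data (cutting times $S_k$ with $S_k-S_{k-1}=S_{k-2}$) and the geometry of the principal nest. The link is the standard fact that closest precritical points determine cutting times. For each $k$, let $z_k<c$ (and its symmetric partner $\hat z_k$, with $f(\hat z_k)=f(z_k)$) be the closest precritical point with $f^{S_k}(z_k)=c$. Then $f^{S_k}$ maps $(z_{k},c)$ monotonically onto $(c,c_{S_k})$, and $S_k$ is exactly the first time that the central branch image covers $c$. I would first record two facts: the nest $I^n$ is bounded by points of the backward orbits of $\alpha$, and every such point lies between consecutive closest precritical points. Using these, I would identify $\partial I^n$ combinatorially. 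Concretely, I would prove by induction on $n$ that
\[
(z_{n+2},\hat z_{n+2})\subset I^n\subset (z_{n+1},\hat z_{n+1}),
\]
with first return time $S_{n+1}$ on $I^n$ near $c$, and with $c_{S_{n+1}}$ landing in a non-central domain.

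For the direction ($\Rightarrow$), the base case (1) follows from $S_0=1$, $S_1=2$, $S_2=3$. The first two iterates of $c$ lie beyond $\alpha$ because $2$ is a cutting time with $Q(1)=0$, so $c_1,c_2\notin I^0=(\hat\alpha,\alpha)$. Since $3=S_2$ is a cutting time, $c_3$ lands in $I^0$. For the inductive step, I would use the Fibonacci relation $S_{n+1}=S_n+S_{n-1}$ together with Hofbauer's recursion: the point $c_{S_k}$ lies between $c$ and $c_{S_{Q(k)}}$, and $Q(k)=k-2$ forces $c_{S_k}$ to lie in the level that is two steps coarser. This gives the following. The return of $c$ to $I^{n-1}$ occurs at time $S_{n+1}$, which is (3) for $I^n$. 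The image $R(c)$ lies in $I^{n-1}\setminus I^n$, so $J^n\ne I^n$. From $c_{S_{n+1}}$ the orbit re-enters $I^{n-1}$ after $S_n$ further steps, which gives $R|J^n=f^{S_n}$. Since $S_{n+1}$ is a cutting time, the central branch image of $f^{S_{n+1}}$ covers $c$, so $R_{I^{n-1}}(I^n)\ni c$; as $R_{I^{n-1}}(I^n)$ is a return domain image, which is all of $I^{n-1}$ on one side, this gives (4). For the inclusion $I^{n-1}\cap\omega(c)\subset I^n\cup J^n$ in (2), every later visit of the critical orbit to $I^{n-1}$ happens at a time of the form $\sum S_{k_i}$, a Zeckendorf-type decomposition. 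Such visits land either in the central domain or in the image domain of $c_{S_{n+1}}$, by induction on the time.

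For the converse ($\Leftarrow$), properties (3) and (4) identify the cutting times. By (4), $R_{I^{n-1}}|I^n=f^{S_{n+1}}$ has a branch covering $c$, so the exponent there is a cutting time. By (2), no other returns intervene, so these times exhaust the cutting times. The composition $R_{I^{n-1}}|I^{n+1}=R|J^n\circ R|I^n$, which is forced by (2), then gives the relation $S_{n+2}=S_{n+1}+S_n$. Together with (1), which fixes $S_0,S_1,S_2$, this yields $Q(k)=k-2$.

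The main obstacle is the inductive bookkeeping for the claim in (2) that $\omega(c)\cap I^{n-1}$ meets only two return domains. It needs a careful ordering of $c_{S_k}$ relative to the closest precritical points $z_k$, using the monotonicity of the central branches and the nice property of the $I^n$. I would try first to prove by induction the nesting
\[
|c_{S_{k}}-c|<|c_{S_{k-2}}-c|,\qquad c_{S_k}\notin (z_{k-1},\hat z_{k-1}),
\]
and to derive all four properties from it.
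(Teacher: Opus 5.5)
There is a genuine gap at the step on which (2)--(4) all depend. You need to show that $c_{S_{n+1}}\in I^{n-1}$, so that $c$ first returns to $I^{n-1}$ exactly at time $S_{n+1}$, and your inductive set-up cannot deliver this.

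First, your nesting is off by one. The map $f^{S_{n+1}}$ sends $(z_{n+1},c)$ monotonically onto the open interval between $c$ and $c_{S_{n+1}}$, which does not contain $c$. So $I^n\subset(z_{n+1},\hat z_{n+1})$ would make (4) false. What actually holds is $(z_{n+1},\hat z_{n+1})\subset I^n\subset(z_n,\hat z_n)$, and (4) amounts to $z_{n+1}\in I^n$.

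Second, the recursion you quote is reversed. The central branch $(z_{k-1},c)$ of $f^{S_k}$ maps onto the interval between $c_{S_k}$ and $f^{S_k}(z_{k-1})=c_{S_{Q(k)}}$. So it is $c$ that lies between these two points, not $c_{S_k}$.

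The correct location lemma (Bruin) gives, for $Q(k)=k-2$, that $c_{S_{n+1}}\in(z_{n-1},z_n)\cup(\hat z_n,\hat z_{n-1})$. By the corrected nesting, these are exactly the gaps that contain the endpoints of $I^{n-1}$. So no comparison with closest precritical points can decide whether $c_{S_{n+1}}\in I^{n-1}$. In particular, your final lemma cannot.

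The base case already shows the problem. Both $c_3$ and $\hat\alpha$ lie in $(z_0,z_1)$. Knowing that $3=S_2$ is a cutting time does not give $c_3\in I^0$: if $4$ were also a cutting time, then $c_4<c<\alpha$ would force $c_3<\hat\alpha$. For the Fibonacci map one has $c_3>\hat\alpha\iff c_4>\alpha\iff c_5<\alpha\iff c_6>\alpha\iff c_7<\alpha$, and this chain uses $S_3=5$ and $S_4=8$.

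For the same reason, deriving (4) from ``the central branch image of $f^{S_{n+1}}$ covers $c$'' is not enough. A half of $I^n$ is a proper subinterval of $(z_n,c)$. Its image contains $c$ only if it ends at the endpoint of $I^{n-1}$ on the far side of $c$.

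The missing idea is to move the comparison one step forward instead of making it statically. Assume the structure up to level $n-1$. By the symmetry $x\mapsto\hat x$, you may take $c_{S_{n+1}}\in(z_{n-1},z_n)$. The map $f^{S_n}$ is monotone on $(z_{n-1},c)$ and sends $z_{n-1}\mapsto c_{S_{n-2}}$, $z_n\mapsto c$, and $c_{S_{n+1}}\mapsto c_{S_{n+2}}$. It also sends the endpoint of $I^{n-1}$ in this gap to the endpoint of $I^{n-2}$ that lies between $c$ and $c_{S_{n-2}}\notin I^{n-2}$. Hence $c_{S_{n+1}}\in I^{n-1}$ if and only if $c_{S_{n+2}}\in I^{n-2}$. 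The latter holds because $c_{S_{n+2}}\in(z_n,\hat z_n)\subset(z_{n-1},\hat z_{n-1})\subset I^{n-2}$.

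The base case is the same argument with $f^2$, which sends $\hat\alpha\mapsto\alpha$ and $c_3\mapsto c_5\in(\hat z_2,\hat z_1)\subset I^0$.

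Once $c_{S_{n+1}}\in I^{n-1}$ is known, the remaining properties follow.
\begin{itemize}
\item The return time is $S_{n+1}$, because under the level-$(n-1)$ return map the orbit runs $c\mapsto c_{S_n}\in J^{n-1}\mapsto c_{S_{n+1}}$.
\item $J^n\ne I^n$, because $c_{S_{n+1}}\notin(z_n,\hat z_n)$.
\item $R_{I^{n-1}}|J^n=f^{S_n}$, because $c_{S_{n+2}}\in(z_n,\hat z_n)\subset I^{n-1}$.
\item (4) follows from the far-endpoint remark above, together with $c_{S_{n-1}}\notin I^{n-1}$.
\end{itemize}
The paper itself only asserts that the proposition follows by induction, with references.

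The converse has a similar weak point. Condition (2) alone does not force $R_{I^n}|I^{n+1}=R|J^n\circ R|I^n$; it only gives a return time $S_{n+1}+iS_n$ with $i\ge1$. To rule out $i\ge2$ you need the kneading fact that $S_{n+2}-S_{n+1}$ is itself a cutting time, together with $S_{n+1}<2S_n$.
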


The proof of Proposition 1 follows readily by induction; see, for example, \cite{LW,MS}. One may observe that the essential dynamics of a Fibonacci unimodal map is captured by restricting the first return map to those return domains intersecting the critical orbit. This restriction, which may be viewed as a generalized renormalization, gives rise to a class of box mappings that is more convenient for our purposes.

Let $\mathcal F$ denote the class of $C^1$ mappings $\phi: I_0^1 \cup I_1^1 \to I_0^0$ satisfying the following:
\begin{enumerate}
\item[(1)] $I_0^1, I_1^1$ and $I_0^0$ are open intervals such that $I_0^1 \cap I_1^1 = \emptyset$ and $I_0^1 \cup I_1^1 \subset I_0^0$. Each branch extends continuously to its closure, and $\phi(\partial I_i^1) \subset \partial I_0^0$ for $i=0,1$;
\item[(2)] $\phi : I_0^1 \to I_0^0$ has a unique critical point $c$ while $\phi : I_1^1 \to I_0^0$ is monotone and onto.
\end{enumerate}
Maps from class $\mathcal F$ will be called {\it box mappings}.

\begin{definition}
A box mapping $\phi$ is called {\bf Fibonacci renormalizable} if 
\begin{enumerate}
\item[(1)] $\phi(I_0^1) \supset I_0^1$; 
\item[(2)] $\phi(c) \notin I_0^1$, $\phi^2(c), \phi^3(c) \in I_0^1$.
\end{enumerate}

\end{definition}

The defining orbit configuration is illustrated in
Figure~\ref{fig:fibonacci-box-mapping}.

\begin{figure}[H]
\centering
\begingroup
\setlength{\fboxsep}{8pt}
\setlength{\fboxrule}{0pt}
\resizebox{0.35\textwidth}{!}{%
\fbox{%
\begin{tikzpicture}[x=0.82cm,y=0.82cm,line cap=round,line join=round]
  \def\xcrit{5.42}
  \def\xsecond{4.35}
  \def\xfirst{8.78}
  \def\curvature{1.78}
  \pgfmathsetmacro{\xthird}{\xfirst-\curvature*(\xsecond-\xcrit)^2}
  \pgfmathsetmacro{\centralleft}{\xcrit-sqrt(\xfirst/\curvature)}
  \pgfmathsetmacro{\centralright}{\xcrit+sqrt(\xfirst/\curvature)}
  \def\sideleft{8.12}
  \pgfmathsetmacro{\sideright}{(10*\xfirst-\xsecond*\sideleft)/(10-\xsecond)}
  \pgfmathsetmacro{\sidecenter}{(\sideleft+\sideright)/2}

  \draw[line width=0.85pt,line cap=rect] (0,0) rectangle (10,10);
  \draw[line width=0.85pt] (0,0) -- (10,10);

  \draw[line width=2.8pt]
    plot[domain=\centralleft:\centralright,samples=160]
      (\x,{\xfirst-\curvature*(\x-\xcrit)^2});
  \draw[line width=2.8pt,line cap=butt] (\sideleft,10) -- (\sideright,0);

  \begin{scope}[dash pattern=on 3.4pt off 3.4pt,line width=0.72pt,line cap=butt]
    \draw (\xcrit,0) -- (\xcrit,\xfirst) -- (\xfirst,\xfirst);
    \draw (\xfirst,0) -- (\xfirst,\xfirst);
    \draw (\xsecond,0) -- (\xsecond,\xthird) -- (\xthird,\xthird);
    \draw (\xthird,0) -- (\xthird,\xthird);
    \draw (\xsecond,\xsecond) -- (\xfirst,\xsecond);
  \end{scope}

  \draw[line width=5.2pt,line cap=butt] (\centralleft,0) -- (\centralright,0);
  \draw[line width=5.2pt,line cap=butt] (\sideleft,0) -- (\sideright,0);

  \node[anchor=base,font=\large] at (\xsecond,-0.70) {$\phi^{2}(c)$};
  \node[anchor=base,font=\large] at (\xcrit,-0.70) {$c$};
  \node[anchor=base,font=\large] at (\xthird,-0.70) {$\phi^{3}(c)$};
  \node[anchor=base,font=\large] at (\xfirst,-0.70) {$\phi(c)$};
  \node[anchor=base,fill=white,inner sep=1.5pt,font=\Large]
    at (\xcrit,0.42) {$I^{1}_{0}$};
  \node[anchor=base,fill=white,inner sep=1.5pt,font=\Large]
    at (\sidecenter,0.42) {$I^{1}_{1}$};
  \node[anchor=west,font=\Large] at (0.55,6.05) {$I^{0}_{0}$};
\end{tikzpicture}
}%
}
\endgroup
\caption{A Fibonacci-renormalizable box mapping
$\phi:I_0^1\cup I_1^1\to I_0^0$. The critical point $c$ belongs to
$I_0^1$, while $\phi(c)\notin I_0^1$ and
$\phi^2(c),\phi^3(c)\in I_0^1$.}
\label{fig:fibonacci-box-mapping}
\end{figure}

Denote by \( I_0^2 \) the connected component of the domain of the first return map to  
\( I_0^1 \) which contains $c$ and by \( I_1^2 \) the connected component of the domain of the first  
return map to \( I_0^1 \) which contains \( \phi^2(c) \). Note that \( I_0^2 \cap I_1^2 = \emptyset \). The map induced from \(\phi\), denoted by \(\mathcal I\phi\), is defined
as the restriction of the first return map \(R_{I_0^1}\) to
\(I_0^2\cup I_1^2\). Clearly, \(\mathcal I\phi\in\mathcal F\).

If \(\mathcal I^n\phi\) is well defined and Fibonacci renormalizable for
every \(n\geq0\), then we say that \(\phi\) is \emph{infinitely Fibonacci
renormalizable}. There are four configurations: the central branch has either a maximum
or a minimum, and the other branch is either increasing or decreasing.
Reflection of the interval pairs these configurations, leaving two types;
see \cite{Bu,LW,JL}. On the central return domain, $\mathcal I\phi=\phi^2$.
If $\phi|_{I_1^1}$ is increasing, a maximum remains a maximum and a minimum
remains a minimum. If it is decreasing, a maximum becomes a minimum and
a minimum becomes a maximum. For a successive four-step inducing cycle, see Figure~\ref{fig:fibonacci-renormalization-cycle}.

\begin{figure}[H]
\centering
\begingroup
\tikzset{
  panel frame/.style={line width=0.82pt,line cap=rect},
  central branch/.style={line width=1.85pt,line cap=round,line join=round},
  side branch/.style={line width=1.65pt,line cap=round,line join=round},
  domain bar/.style={line width=4.4pt,line cap=butt},
  partition/.style={dash pattern=on 3.4pt off 3.4pt,line width=0.72pt,line cap=butt},
  domain label/.style={font=\normalsize,anchor=base},
  map label/.style={font=\normalsize,inner sep=1pt},
  cycle arrow/.style={-{Stealth[length=5.5pt,width=4.2pt]},line width=0.9pt}
}
\setlength{\fboxsep}{9pt}
\setlength{\fboxrule}{0pt}
\resizebox{0.68\textwidth}{!}{%
\fbox{%
\begin{tikzpicture}[x=0.82cm,y=0.82cm]

  \begin{scope}[shift={(0,8.0)}]
    \draw[panel frame] (0,0) rectangle (5,5);
    \draw[partition] (1.25,0) -- (1.25,5);
    \draw[partition] (3.75,0) -- (3.75,5);
    \draw[central branch] (1.25,0)
      .. controls (1.55,3.00) and (2.02,4.45) .. (2.50,4.45)
      .. controls (2.98,4.45) and (3.45,3.00) .. (3.75,0);
    \draw[side branch] (4.10,5) -- (4.78,0);
    \draw[domain bar] (1.18,0) -- (3.82,0);
    \draw[domain bar] (4.10,0) -- (4.78,0);
    \node[map label] at (2.50,3.25) {$\phi^{S_{n-1}}$};
    \node[map label,anchor=west] at (5.12,4.05) {$\phi^{S_{n-2}}$};
    \node[domain label] at (2.50,0.48) {$I_0^n$};
    \node[domain label] at (4.12,0.48) {$I_1^n$};
    \node[domain label] at (2.50,-0.52) {$c$};
    \node[domain label,anchor=west] at (0.12,2.52) {$I_0^{n-1}$};
    \node[font=\large,anchor=base] at (2.50,5.72) {$n=1+4k$};
  \end{scope}

  \begin{scope}[shift={(8.5,8.0)}]
    \draw[panel frame] (0,0) rectangle (5,5);
    \draw[partition] (1.25,0) -- (1.25,5);
    \draw[partition] (3.75,0) -- (3.75,5);
    \draw[central branch] (1.25,5)
      .. controls (1.55,2.00) and (2.02,0.55) .. (2.50,0.55)
      .. controls (2.98,0.55) and (3.45,2.00) .. (3.75,5);
    \draw[side branch] (0.10,0) -- (0.78,5);
    \draw[domain bar] (1.18,0) -- (3.82,0);
    \draw[domain bar] (0.10,0) -- (0.78,0);
    \node[map label] at (2.50,1.72) {$\phi^{S_{n-1}}$};
    \node[map label,anchor=east] at (-0.12,2.25) {$\phi^{S_{n-2}}$};
    \node[domain label] at (3.12,0.48) {$I_0^n$};
    \node[domain label] at (0.72,0.48) {$I_1^n$};
    \node[domain label] at (2.50,-0.52) {$c$};
    \node[domain label,font=\large] at (2.50,4.15) {$I_0^{n-1}$};
    \node[font=\large,anchor=base] at (2.50,5.72) {$n=2+4k$};
  \end{scope}

  \begin{scope}[shift={(8.5,0)}]
    \draw[panel frame] (0,0) rectangle (5,5);
    \draw[partition] (1.25,0) -- (1.25,5);
    \draw[partition] (3.75,0) -- (3.75,5);
    \draw[central branch] (1.25,5)
      .. controls (1.55,2.00) and (2.02,0.55) .. (2.50,0.55)
      .. controls (2.98,0.55) and (3.45,2.00) .. (3.75,5);
    \draw[side branch] (0.08,5) -- (0.76,0);
    \draw[domain bar] (1.18,0) -- (3.82,0);
    \draw[domain bar] (0.08,0) -- (0.76,0);
    \node[map label] at (2.50,1.72) {$\phi^{S_{n-1}}$};
    \node[map label,anchor=east] at (-0.12,3.88) {$\phi^{S_{n-2}}$};
    \node[domain label] at (3.12,0.48) {$I_0^n$};
    \node[domain label] at (0.78,0.48) {$I_1^n$};
    \node[domain label] at (2.50,-0.52) {$c$};
    \node[domain label,font=\large] at (2.50,4.15) {$I_0^{n-1}$};
    \node[font=\large,anchor=base] at (2.50,5.72) {$n=3+4k$};
  \end{scope}

  \begin{scope}[shift={(0,0)}]
    \draw[panel frame] (0,0) rectangle (5,5);
    \draw[partition] (1.25,0) -- (1.25,5);
    \draw[partition] (3.75,0) -- (3.75,5);
    \draw[central branch] (1.25,0)
      .. controls (1.55,3.00) and (2.02,4.45) .. (2.50,4.45)
      .. controls (2.98,4.45) and (3.45,3.00) .. (3.75,0);
    \draw[side branch] (4.06,0) -- (4.74,5);
    \draw[domain bar] (1.18,0) -- (3.82,0);
    \draw[domain bar] (4.06,0) -- (4.74,0);
    \node[map label] at (2.50,3.25) {$\phi^{S_{n-1}}$};
    \node[map label,anchor=west] at (5.12,3.88) {$\phi^{S_{n-2}}$};
    \node[domain label] at (2.50,0.48) {$I_0^n$};
    \node[domain label] at (4.68,0.48) {$I_1^n$};
    \node[domain label] at (2.50,-0.52) {$c$};
    \node[domain label,anchor=west] at (0.12,2.52) {$I_0^{n-1}$};
    \node[font=\large,anchor=base] at (2.50,5.72) {$n=4+4k$};
  \end{scope}

  \draw[cycle arrow] (5.78,9.25) -- node[above,font=\large] {$\mathcal I$} (7.72,9.25);
  \draw[cycle arrow] (11.00,7.30) -- node[right,font=\large] {$\mathcal I$} (11.00,6.35);
  \draw[cycle arrow] (7.72,2.48) -- node[above,font=\large] {$\mathcal I$} (5.78,2.48);
  \draw[cycle arrow] (2.50,6.35) -- node[right,font=\large] {$\mathcal I$} (2.50,7.30);
\end{tikzpicture}
}%
}
\endgroup
\caption{The four-step orientation cycle of the induced Fibonacci box
mappings $\mathcal I^{n-1}\phi:I_0^n\cup I_1^n\to I_0^{n-1}$. The
central and monotone branches are $\phi^{S_{n-1}}$ and
$\phi^{S_{n-2}}$, respectively. Each arrow represents one inducing step; the last arrow
returns to the first configuration with $k$ replaced by $k+1$.}
\label{fig:fibonacci-renormalization-cycle}
\end{figure}

We shall use the Main Theorem of \cite{BKNS} in the following form.

\begin{thm}
There exists $\ell_0<\infty$ such that the following holds for every $\ell\geq\ell_0$. Let $\phi\in\mathcal F$ be a $C^3$ infinitely Fibonacci renormalizable box mapping with a non-flat critical point $c$ of order $\ell$. Then $A=\omega(c)$ is a minimal Cantor set whose basin ${\rm Rel}(A)$ has positive Lebesgue measure.

\end{thm}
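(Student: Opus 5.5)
We do not prove this from scratch. The plan is to check that the proof in \cite{BKNS} only uses information carried by the induced box mappings $\mathcal I^n\phi$, and then to redo its argument at that level. In \cite{BKNS} the unimodal map enters only through its principal nest and the first return maps $R_{I^{n-1}}|_{I^n\cup J^n}$. By Proposition~1 these are precisely Fibonacci renormalizable box mappings in $\mathcal F$, with central branch $f^{S_{n+1}}$ and monotone branch $f^{S_n}$. For an infinitely Fibonacci renormalizable $\phi\in\mathcal F$, the sequence $\mathcal I^{n}\phi:I_0^{n+1}\cup I_1^{n+1}\to I_0^{n}$ has the same combinatorics: central branch $\phi^{S_{n}}$, monotone branch $\phi^{S_{n-1}}$, and the four-step orientation cycle of Figure~\ref{fig:fibonacci-renormalization-cycle}. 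So the statement becomes: the BKNS conclusions hold for any $C^3$ sequence of box mappings with this combinatorics and a non-flat critical point of order $\ell\ge\ell_0$.

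\textbf{Step 1: real bounds and the Cantor structure.} Using only $C^3$ smoothness and non-flatness, establish real a priori bounds for $\mathcal I^n\phi$ at large $n$.
\begin{itemize}
\item Apply Koebe distortion on the monotone branches. The required Koebe space comes from niceness of $I_0^n$: boundary points of $I_0^n$ map to $\partial I_0^{n-1}$.
\item Exclude wandering intervals and attracting cycles for box maps by the standard Mañé and disjointness arguments; here there are only finitely many branches at each level.
\item Conclude that $|I_0^n|\to0$ and that the ratios $|I_0^{n+1}|/|I_0^n|$ and $|I_1^{n+1}|/|I_0^n|$ behave as in \cite{BKNS}. For large $\ell$ these ratios converge to the corresponding quantities for the limiting power-law map.
\end{itemize}
Minimality of $\omega(c)$ then follows. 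Write $\omega(c)=\bigcap_n\bigcup_{j}\overline{\phi^j(I^{n}_0\cup I^n_1)}$, where the union runs over the finitely many iterates before the first return. This set is a Cantor set because the diameters of all these pieces tend to zero. Every point of it returns to every $I_0^n$, which is persistent recurrence of $c$, and hence $\omega(c)$ is minimal.

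\textbf{Step 2: positive measure basin.} Transplant the BKNS drift argument. For $x\in I_0^0$, let $n(x)$ record the deepest level $I_0^{n}$ visited by the orbit between successive returns. Using the near-self-similar bounds from Step~1, show that for $\ell\ge\ell_0$ the level performs a random walk with positive drift towards deeper levels. The mechanism is that the central branch has a very flat top when $\ell$ is large, so a definite proportion of each level is pushed to the next level. This gives a set of positive Lebesgue measure whose orbits accumulate only on $\omega(c)$, i.e.\ ${\rm Rel}(A)$ has positive measure. All the BKNS estimates are local: they are distortion bounds on branches of $\mathcal I^n\phi$ and the asymptotics of the scaling ratios. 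Hence they transfer verbatim once Step~1 is in place.

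\textbf{Main obstacle.} The delicate point is Step~1 in the box-mapping setting without a global unimodal map. We must supply Koebe space for the branches of $\mathcal I^n\phi$ without using negative Schwarzian derivative, and rule out wandering intervals for the box map directly. The first approach to try is extension: extend $\phi$ to a $C^3$ map of $\overline{I_0^0}$ with finitely many extra monotone branches, and apply the known real bounds for non-flat $C^3$ multimodal maps (\cite{vSV}). These bounds concern only the nest around $c$, so they apply even though the global kneading of the extension need not be Fibonacci.
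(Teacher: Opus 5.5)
Your proposal takes essentially the same route as the paper. There too the theorem is justified by arguing that the proof of \cite{BKNS} uses only central-nest data: the geometric estimates of \cite[Theorem 3.14]{BKNS}, an induced Markov map on $I_0^1$ whose construction depends only on the itineraries of $\partial I_0^n$, and the random-walk drift argument of \cite[Theorem~5.2]{B}. These data are the same for infinitely Fibonacci renormalizable box mappings. Your extra step of getting real bounds for a $C^3$ box map via an extension and \cite{vSV} addresses a point the paper passes over; note, however, that attracting cycles of such a $\phi$ can only be excluded near $\omega(c)$ at deep levels, not globally, and that is all the argument needs.
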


The proof of the Main Theorem in \cite{BKNS} extends to the Fibonacci
box mappings considered here. Indeed, the combinatorial description of
the critical orbit yields the same geometric estimates as those in
\cite[Theorem 3.14]{BKNS}. The induced Markov map can then be constructed
in the same way as in \cite[Section 5]{BKNS} and
\cite[Section 4]{B}. Note that the induced Markov map is constructed
only on $I_0^1$, and its construction depends on the itineraries of
the boundary points $\partial I_0^n$, which are exactly the same in the
present setting. The random walk argument of
\cite[Theorem~5.2]{B} therefore applies and yields the desired result.
Thus, the proof of \cite{BKNS} carries over with only minor
modifications.

\begin{remark}

We emphasize that this argument is only rapid for the Fibonacci
combinatorics of the box mapping. For more general combinatorics, such
as those considered in \cite{Z}, the above argument no longer applies.
In that setting, one needs to use the limit drift method, and only
partial results are currently obtained in \cite{Z}.

\end{remark}

\subsection{Fibonacci bimodal map}

Let $\mathscr B$ denote the class of $C^3$ bimodal maps
$f:[-1,1]\to[-1,1]$ with non-flat critical points. Let $\mathscr B^+$ and $\mathscr B^-$ denote, respectively, the subset of positive and negative bimodal maps from class $\mathscr B$.

Assume that both critical points $d<c$ are recurrent and non-periodic.
Choose a fixed point $p$ with three distinct preimages
$p_1<d<p_2<c<p_3$, and define $U^0=(p_1,p_2)$ and
$V^0=(p_2,p_3)$. For $f\in\mathscr B^+$, such a point exists:
$f(d)>d$ and $f(c)<c$, so there is a unique
fixed point $p\in(d,c)$, with $p_2=p$.

If no fixed point has three distinct preimages, then
$f\in\mathscr B^-$ has a unique fixed point $p\notin[d,c]$,
with $f^{-1}(p)=\{p\}$. Let $J$ be the interval with endpoints
$p$ and $1$ when $p<d$, and with endpoints $-1$ and $p$ when
$p>c$. Then $f^2|_J:J\to J$ is positive bimodal, and we apply
the above construction after an affine change of coordinates.

Define
inductively
\[
U^0\supset U^1\supset U^2\supset\cdots\supset\{d\},
V^0\supset V^1\supset V^2\supset\cdots\supset\{c\},
\]
such that, for each $k\geq1$, $U^k$ and $V^k$ are components of the
domain of the first return map $\phi_k$ to $U^{k-1}\cup V^{k-1}$. The intervals $U^k$ and $V^k$ are called the \emph{critical domains} of $\phi_k$, and the two nested sequences above form the
\emph{twin principal nest}. Let $r_k$ and $t_k$ denote the critical return times defined by $ \phi_k(d)=f^{r_k}(d), \phi_k(c)=f^{t_k}(c)$. The first return map $\phi_k$ is called {\it central} if $ \phi_k(d)\in U^k\cup V^k$ 
or $\phi_k(c)\in U^k\cup V^k$; otherwise it is called {\it non-central}.

\begin{definition}
Let $f$ be a bimodal map with recurrent critical points $d<c$, and
suppose that the critical return times $r_k$ and $t_k$ are defined for
all $k\ge1$. We say that $f$ has \emph{Fibonacci combinatorics} if $r_k=t_k=S_k$, where $S_1=2, S_2=3,
S_{k+1}=S_k+S_{k-1}$.
\end{definition}

According to \cite[Lemma 3.1]{V}, a Fibonacci bimodal map has no central return. Moreover, the description of its kneading sequences in \cite[Section 4]{V} shows that its kneading invariant is combinatorially symmetric in the following sense. In particular, there exists a fixed point $p$ between its critical points.

Let $d<c$ be the two critical points of $f$. The points $d$ and $c$
determine the partition
\[
I_L=(-1, d) , I_M=(d,c), I_R=(c, 1).
\]
For $x\in [-1, 1]$, define its itinerary
\[
\iota_f(x)=\iota_0(x)\iota_1(x)\iota_2(x)\cdots
\]
over the alphabet $\{L,D,M,C,R\}$ by
\[
\iota_n(x)=
\begin{cases}
L,& f^n(x)<d,\\
D,& f^n(x)=d,\\
M,& d<f^n(x)<c,\\
C,& f^n(x)=c,\\
R,& f^n(x)>c.
\end{cases}
\]
The {\it kneading invariant} of $f$ is defined as
\[
K(f)=\bigl(\iota_f(f(d)),\iota_f(f(c))\bigr).
\]
Let $\tau$ be the involution
\[
\tau(L)=R, \tau(R)=L,
\tau(D)=C, \tau(C)=D,
\tau(M)=M,
\]
extended coordinatewise to itineraries.

\begin{definition}
Let $f$ be a bimodal map with critical points $d<c$.
We say that $f$ is \emph{combinatorially symmetric} if $\iota_f(f(c))
=
\tau\bigl(\iota_f(f(d))\bigr)$. 
\end{definition}

The combinatorial symmetry implies that the post-critical points $\phi_k(d)$ and $\phi_k(c)$ always lie in corresponding return domains on opposite sides. For each $k \geq 1$, let $D^k \subset U^{k-1}$ and $C^k \subset V^{k-1}$ be the return domains intersecting $\{\phi_k(d),\phi_k(c)\}$, respectively, and call them the {\it post-critical domains}. Note that the post-critical branches $\phi_k|_{D^k}$ and $\phi_k|_{C^k}$ are monotone and onto. The Fibonacci combinatorics also imposes some constraints on the positions of the post-critical domains and their images. This leads us to consider the following three types of first return maps $\phi_k$:

\begin{itemize}
\item Type $\mathcal A$: if $\phi_k(D^k) = V^{k-1}$, $\phi_k(U^k) \subset V^{k-1}$ and $\phi_k(C^k) = U^{k-1}$, $\phi_k(V^k) \subset U^{k-1}$;
\item Type $\mathcal B$: if $\phi_k(D^k) = V^{k-1}$, $\phi_k(U^k) \subset U^{k-1}$ and $\phi_k(C^k) = U^{k-1}$, $\phi_k(V^k) \subset V^{k-1}$;
\item Type $\mathcal C$: if $\phi_k(D^k) = U^{k-1}$, $\phi_k(U^k) \subset V^{k-1}$ and $\phi_k(C^k) = V^{k-1}$, $\phi_k(V^k) \subset U^{k-1}$.
\end{itemize}

The Fibonacci combinatorics shows that the sequence of first return maps $\phi_1,\phi_2,\phi_3,\ldots$ exhibits a specific sequence of types, as stated in Fact 2.1 below. Together with an analysis of the orientations and the precise positions of the branches, this determines the topological properties of a Fibonacci bimodal map. We therefore subdivide each type $\mathcal A$, $\mathcal B$ and $\mathcal C$ into subtypes $\mathcal A^{ij}$, $\mathcal B^{ij}$ and $\mathcal C^{ij}$, where $i,j\in\{+,-\}$. Here $i=+$ or $i=-$ according as the post-critical branches of $\phi_k$ are orientation-preserving or orientation-reversing, and $j=+$ or $j=-$ according as the critical branch of $\phi_k$ has a local maximum or a local minimum at $d$.

\begingroup
\raggedbottom
\begin{fact}\cite{JM,V}
If $f\in\mathscr B$ has Fibonacci combinatorics, then for every $k\geq1$ the following hold:
\begin{itemize}
\item[1.] The post-critical set $A:=\omega(c)=\omega(d)$ is a minimal Cantor set.
\item[2.] $\phi_k(U^k\cup V^k)\supset U^k\cup V^k$.
\item[3.] Set $\phi_0:=f$, and let $S_0=1$, $S_1=2$ and $S_{n+1}=S_n+S_{n-1}$ for $n\geq1$. Then $\phi_k|_{U^k}$ and $\phi_k|_{V^k}$ are equal to $f^{S_k}$, while $\phi_k|_{D^k}$ and $\phi_k|_{C^k}$ are equal to $f^{S_{k-1}}$. Furthermore,
\[
\phi_k|_{U^k\cup V^k}=\phi_{k-1}^2, \phi_k|_{D^k\cup C^k}=\phi_{k-1}.
\]
\item[4.] The sequence $\phi_1,\phi_2,\phi_3,\ldots$ of first return maps exhibits the sequence of types
\[
\mathcal A^{++}\mathcal B^{-+}\mathcal C^{--}\mathcal A^{-+}\mathcal B^{+-}\mathcal C^{+-}\mathcal A^{+-}\mathcal B^{--}\mathcal C^{-+}\mathcal A^{--}\mathcal B^{++}\mathcal C^{++}\mathcal A^{++}\ldots
\]
or
\[
\mathcal C^{-+}\mathcal A^{--}\mathcal B^{++}\mathcal C^{++}\mathcal A^{++}\mathcal B^{-+}\mathcal C^{--}\mathcal A^{-+}\mathcal B^{+-}\mathcal C^{+-}\mathcal A^{+-}\mathcal B^{--}\mathcal C^{-+}\ldots,
\]
according as $f\in\mathscr B^+$ or $f\in\mathscr B^-$, respectively; see
Figure~\ref{fig:fibonacci-types}.
\end{itemize}
\end{fact}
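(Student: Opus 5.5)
We argue by induction on $k$. We carry the four items simultaneously, together with an auxiliary statement on the relative positions of $U^k,V^k,D^k,C^k$ and their images. The base case is a direct analysis of $\phi_1$ using the fixed point $p$ between $d$ and $c$ (guaranteed by \cite[Section 4]{V} and combinatorial symmetry). For $f\in\mathscr B^+$, $f(d)>d$ and $f(c)<c$. The relations $r_1=t_1=S_1=2$ and the absence of central returns (\cite[Lemma 3.1]{V}) force $f(d)\in V^0$ and $f(c)\in U^0$, and the next returns to land in the non-critical domains. From this one reads off that $\phi_1$ has type $\mathcal A^{++}$, and item 3 holds at $k=1$ with $\phi_0=f$. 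For $f\in\mathscr B^-$ we pass to $f^2|_J$ as in the construction of $U^0,V^0$. Tracking the doubled return times and the reversal of orientation gives the initial type $\mathcal C^{-+}$.

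For the inductive step, suppose $\phi_{k-1}$ has a known type and the return times are $S_{k-1}$ (critical) and $S_{k-2}$ (post-critical). The absence of central returns means $\phi_{k-1}(d)$ and $\phi_{k-1}(c)$ lie in $D^{k-1}\cup C^{k-1}$, where $\phi_{k-1}$ is monotone and onto a critical domain. Hence the critical value is mapped onto a domain containing a critical point. Pulling back $U^{k-1}\cup V^{k-1}$ first under the onto branch and then under the critical branch shows that $U^k,V^k$ are the components of $\phi_{k-1}^{-2}(U^{k-2}\cup V^{k-2})$ around $d,c$. Thus $\phi_k|_{U^k\cup V^k}=\phi_{k-1}^2$ with return time $S_{k-1}+S_{k-2}=S_k$. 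Niceness of the twin nest makes this the first return. The Fibonacci condition $r_k=t_k=S_k$ then identifies the post-critical domains $D^k,C^k$ as the pieces on which $\phi_k$ equals the previous critical-domain branch. This gives $\phi_k|_{D^k\cup C^k}=\phi_{k-1}$ with time $S_{k-1}$, which is item 3. Item 2, $\phi_k(U^k\cup V^k)\supset U^k\cup V^k$, follows because $\phi_k(d)$ lies in a post-critical domain. Its image under one more step already covers a critical domain, so the critical value lies beyond the boundary of $U^k$ (respectively $V^k$) on the correct side. This is the non-central, "high" return, mirroring item (4) of Proposition 1.

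Item 4 is a finite-state bookkeeping argument. The type and signs of $\phi_k$ are determined by those of $\phi_{k-1}$:
\begin{itemize}
\item the orientation sign of the new post-critical branch equals the old critical-branch extremum data composed with the old post-critical orientation;
\item whether the critical branch $\phi_{k-1}^2$ has a maximum or minimum at $d$ is the old extremum type twisted by the orientation of the old onto branch;
\item which of $U^{k-1},V^{k-1}$ each image lies in is read from the old type.
\end{itemize}
Writing this as a transition map on the twelve labels $\mathcal X^{ij}$ and iterating from the base case reproduces the two period-twelve sequences stated in item 4. The main obstacle is making the positional claims rigorous, namely that $\phi_k(U^k)$ lands in the asserted domain among $U^{k-1},V^{k-1}$ and on the right side of $d$ or $c$. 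I would handle this using combinatorial symmetry, which pairs the situation at $d$ with that at $c$ through $\tau$, so that only one critical point needs checking at each step.

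Finally, item 1 follows as for Fibonacci unimodal maps. The nested domains $U^k,V^k$ shrink to $\{d\},\{c\}$, since the points are non-periodic and there are no wandering intervals for non-flat $C^3$ maps. The orbit of $d$ visits $V^k$, because $\phi_k(d)$ passes through a post-critical domain mapped onto $V^{k-1}$ and then deeper by recurrence. Hence $c\in\omega(d)$, and symmetrically $d\in\omega(c)$, so $\omega(c)=\omega(d)$. Minimality comes from the uniformly bounded first-return times to each level-$k$ domain: every point of $A$ returns to $U^k\cup V^k$ within time $S_{k+1}$. The set $A$ is Cantor because each level splits $A\cap(U^{k-1}\cup V^{k-1})$ into pieces inside the four disjoint domains $U^k,V^k,D^k,C^k$.
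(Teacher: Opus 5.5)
The paper does not prove this Fact---it is imported from \cite{V,JM}---so your induction has to carry all of it. It breaks exactly at the positional information you set aside as ``the main obstacle''. Item 3 is essentially fine, though it is the hypothesis $r_k=S_k$ that forces $\phi_{k-1}^2(d)\in U^{k-1}\cup V^{k-1}$; the return time is not an output of the pullback. Your argument for item 2, however, does not work. That $\phi_k(D^k)$ covers $U^{k-1}$ or $V^{k-1}$ says nothing about whether the critical value $\phi_k(d)$ lies on the far side of a critical domain from the boundary image $\phi_k(\partial U^k)$, and combinatorial symmetry only moves the question from $d$ to $c$. What works is to look two levels up. Since $\phi_k^2(d)=\phi_{k+1}(d)$ lies in $D^{k+1}\cup C^{k+1}$, on which $\phi_{k+1}=\phi_k$, item 3 gives $\phi_{k+2}(d)=\phi_k^3(d)\in U^{k+1}\cup V^{k+1}$. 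Let $W\in\{U^k,V^k\}$ contain $\phi_k^2(d)$. Then $\phi_k(W)$ is an interval joining a critical value outside $U^k\cup V^k$ to a point of $\partial(U^{k-1}\cup V^{k-1})$, so it either contains or misses each of $U^k$ and $V^k$; hence it contains the one containing $\phi_k^3(d)$. Doing the same for $c$, and using that $\phi_{k+2}(d)$ and $\phi_{k+2}(c)$ lie in different components, gives item 2. So item 2 should be proved only after item 3 and the letter part of item 4 (which, as you say, needs only the old type) are established at all levels.

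Your sign bookkeeping is also wrong as stated. The new post-critical branch $\phi_k|_{D^k}=\phi_{k-1}|_{D^k}$ is a restriction of the old critical branch. Its orientation is therefore $j_{k-1}$ times a sign recording on which side of $d$ the domain $D^k$ lies; the old post-critical orientation $i_{k-1}$ plays no role. Your first two bullets give $i_k=i_{k-1}j_{k-1}=j_k$, hence $i_k=j_k=+$ for all $k\ge3$, which already contradicts $\mathcal B^{-+}$ at $k=2$. The missing side datum comes from item 2. If $\phi_k|_{U^k}$ has a maximum at $d$, then $\phi_k(U^k)$ runs from the lower endpoint of its target component up to $\phi_k(d)$ and contains the critical domain there, so $\phi_k(d)$ lies to the right of that critical point (to the left for a minimum). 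The same holds at $c$, whose extremum type is opposite to that at $d$: $f$ has opposite extrema at $d$ and $c$, and $\tau$ preserves the number of symbols in decreasing laps, so $f^{S_k-1}$ has the same orientation at $f(d)$ and at $f(c)$. This fixes the side of $D^k$ from the letter and $j_k$. With $j_k=i_{k-1}j_{k-1}$ and the corrected rule for $i_k$, one does recover both period-twelve sequences.

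The base case needs the same input, and it has errors of its own:
\begin{itemize}
\item $i_1=+$ in $\mathcal A^{++}$ means $f^2(d)>c$, which is item 2 at level $1$.
\item $r_1=t_1=2$ forces $f(d)>p_3$ and $f(c)<p_1$, not $f(d)\in V^0$ as you wrote.
\item For $\mathscr B^-$ you cannot pass to $f^2|_J$, since every return time would then be even, contradicting $S_2=3$. The fixed point in $(d,c)$ exists there too. The direct analysis ($d$ a minimum, $f(d)<p_1$ on a decreasing lap, $f(V^0)$ to the right of $p$) gives the letter $\mathcal C$ with $j_1=+$, and $i_1=-$ again follows from $f^2(d)>c$.
\end{itemize}
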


It is also possible to describe Fibonacci combinatorics in the context of kneading map, see \cite{JM2}.

\begin{figure}[H]
		\centering
		
			\begin{tikzpicture}[scale=1, line width=0.6pt, >=stealth]
			\pic at(-6,6) {my squ};
			\pic at($(-6,6)+(-0.8,0)$) {my arc21};
			\pic at($(-6,6)+(-0.8,0.2)$) {my arc11};
			\pic at($(-6,6)+(0.8,-0.2)$) {my arc12};
			\pic at($(-6,6)+(0.8,0)$) {my arc24};
			\draw[dotted] ($(-6,6)+(-0.8,-1.6)$) -- ($(-6,6)+(-0.8,1.2)$);
			\draw[dotted] ($(-6,6)+(0.8,-1.6)$) -- ($(-6,6)+(0.8,-1.2)$);	
		
			\pic at(-2,6) {my squ};
			\pic at($(-2,6)+(-0.8,-1.4)$) {my arc11};
			\pic at($(-2,6)+(-0.8,0)$) {my arc22};
			\pic at($(-2,6)+(0.8,0)$) {my arc23};
			\pic at($(-2,6)+(0.8,1.4)$) {my arc12};
			\draw[dotted] ($(-2,6)+(-0.8,-1.6)$) -- ($(-2,6)+(-0.8,-0.4)$);
			\draw[dotted] ($(-2,6)+(0.8,-1.6)$) -- ($(-2,6)+(0.8,0.4)$);

			\pic at(2,6) {my squ};
			\pic at($(2,6)+(-0.8,1.4)$) {my arc12};
			\pic at($(2,6)+(-0.8,-1.6)$) {my arc22};
			\pic at($(2,6)+(0.8,1.6)$) {my arc23};
			\pic at($(2,6)+(0.8,-1.4)$) {my arc11};
			\draw[dotted] ($(2,6)+(-0.8,-1.6)$) -- ($(2,6)+(-0.8,0.4)$);
			\draw[dotted] ($(2,6)+(0.8,-1.6)$) -- ($(2,6)+(0.8,-0.4)$);
		
			\end{tikzpicture}
		\caption{Types $\mathcal A^{++}$, $\mathcal B^{-+}$, $\mathcal C^{--}$, $\mathcal A^{-+}$, $\mathcal B^{+-}$, $\mathcal C^{+-}$, $\mathcal A^{+-}$, $\mathcal B^{--}$, $\mathcal C^{-+}$, $\mathcal A^{--}$, $\mathcal B^{++}$, $\mathcal C^{++}$}
		\label{fig:fibonacci-types}
\end{figure}

\begin{figure}[H]
		\ContinuedFloat
		\centering
		\begin{tikzpicture}[scale=1, line width=0.6pt, >=stealth]

			\pic at(-6,2) {my squ};
			\pic at($(-6,2)+(-0.8,1.6)$) {my arc23};
		        \pic at($(-6,2)+(-0.8,0.2)$) {my arc11};
		        \pic at($(-6,2)+(0.8,-0.2)$) {my arc12};
			\pic at($(-6,2)+(0.8,-1.6)$) {my arc22};
			\draw[dotted] ($(-6,2)+(-0.8,-1.6)$) -- ($(-6,2)+(-0.8,1.2)$);
			\draw[dotted] ($(-6,2)+(0.8,-1.6)$) -- ($(-6,2)+(0.8,-1.2)$);

		        \pic at(-2,2) {my squ};
			\pic at($(-2,2)+(-0.8,0)$) {my arc21};
			\pic at($(-2,2)+(-0.8,-0.2)$) {my arc12};
			\pic at($(-2,2)+(0.8,0.2)$) {my arc11};
			\pic at($(-2,2)+(0.8,0)$) {my arc24};
			\draw[dotted] ($(-2,2)+(-0.8,-1.6)$) -- ($(-2,2)+(-0.8,-1.2)$);
			\draw[dotted] ($(-2,2)+(0.8,-1.6)$) -- ($(-2,2)+(0.8,1.2)$);

			\pic at(2,2) {my squ};
			\pic at($(2,2)+(-0.8,1.4)$) {my arc12};
			\pic at($(2,2)+(-0.8,0)$) {my arc24};
			\pic at($(2,2)+(0.8,0)$) {my arc21};
			\pic at($(2,2)+(0.8,-1.4)$) {my arc11};
			\draw[dotted] ($(2,2)+(-0.8,-1.6)$) -- ($(2,2)+(-0.8,0.4)$);
			\draw[dotted] ($(2,2)+(0.8,-1.6)$) -- ($(2,2)+(0.8,-0.4)$);

			\pic at(-6,-2) {my squ};
			\pic at($(-6,-2)+(-0.8,1.4)$) {my arc12};
			\pic at($(-6,-2)+(-0.8,1.6)$) {my arc24};
			\pic at($(-6,-2)+(0.8,-1.6)$) {my arc21};
			\pic at($(-6,-2)+(0.8,-1.4)$) {my arc11};
			\draw[dotted] ($(-6,-2)+(-0.8,-1.6)$) -- ($(-6,-2)+(-0.8,0.4)$);
			\draw[dotted] ($(-6,-2)+(0.8,-1.6)$) -- ($(-6,-2)+(0.8,-0.4)$);

			\pic at(-2,-2) {my squ};
			\pic at($(-2,-2)+(-0.8,1.6)$) {my arc23};
			\pic at($(-2,-2)+(-0.8,-0.2)$) {my arc12};
			\pic at($(-2,-2)+(0.8,0.2)$) {my arc11};
			\pic at($(-2,-2)+(0.8,-1.6)$) {my arc22};
			\draw[dotted] ($(-2,-2)+(-0.8,-1.6)$) -- ($(-2,-2)+(-0.8,-1.2)$);
			\draw[dotted] ($(-2,-2)+(0.8,-1.6)$) -- ($(-2,-2)+(0.8,1.2)$);
		
			\pic at(2,-2) {my squ};
			\pic at($(2,-2)+(-0.8,0)$) {my arc23};
			\pic at($(2,-2)+(-0.8,0.2)$) {my arc11};
			\pic at($(2,-2)+(0.8,-0.2)$) {my arc12};
			\pic at($(2,-2)+(0.8,0)$) {my arc22};
			\draw[dotted] ($(2,-2)+(-0.8,-1.6)$) -- ($(2,-2)+(-0.8,1.2)$);
			\draw[dotted] ($(2,-2)+(0.8,-1.6)$) -- ($(2,-2)+(0.8,-1.2)$);

			\pic at(-6,-6) {my squ};
			\pic at($(-6,-6)+(-0.8,1.4)$) {my arc12};
			\pic at($(-6,-6)+(-0.8,0)$) {my arc22};
			\pic at($(-6,-6)+(0.8,0)$) {my arc23};
			\pic at($(-6,-6)+(0.8,-1.4)$) {my arc11};
			\draw[dotted] ($(-6,-6)+(-0.8,-1.6)$) -- ($(-6,-6)+(-0.8,0.4)$);
			\draw[dotted] ($(-6,-6)+(0.8,-1.6)$) -- ($(-6,-6)+(0.8,-0.4)$);
		
			\pic at(-2,-6) {my squ};
			\pic at($(-2,-6)+(-0.8,-1.4)$) {my arc11};
			\pic at($(-2,-6)+(-0.8,1.6)$) {my arc24};
			\pic at($(-2,-6)+(0.8,-1.6)$) {my arc21};
			\pic at($(-2,-6)+(0.8,1.4)$) {my arc12};
			\draw[dotted] ($(-2,-6)+(-0.8,-1.6)$) -- ($(-2,-6)+(-0.8,-0.4)$);
			\draw[dotted] ($(-2,-6)+(0.8,-1.6)$) -- ($(-2,-6)+(0.8,0.4)$);

			\pic at(2,-6) {my squ};
			\pic at($(2,-6)+(-0.8,-1.6)$) {my arc21};
			\pic at($(2,-6)+(-0.8,0.2)$) {my arc11};
			\pic at($(2,-6)+(0.8,-0.2)$) {my arc12};
			\pic at($(2,-6)+(0.8,1.6)$) {my arc24};
			\draw[dotted] ($(2,-6)+(-0.8,-1.6)$) -- ($(2,-6)+(-0.8,1.2)$);
			\draw[dotted] ($(2,-6)+(0.8,-1.6)$) -- ($(2,-6)+(0.8,-1.2)$);

		\end{tikzpicture}
		\caption[]{Continued}
	\end{figure}
\endgroup

\section{Existence of symmetric Fibonacci bimodal maps}
\label{sec:existence}

In this section we show that, for every prescribed critical order $\ell>3$, there exists a symmetric bimodal map with Fibonacci combinatorics whose two critical points both have critical order $\ell$. The argument is based on the full-family theorem for multimodal maps \cite{GvS,MS}. We first construct an explicit two-parameter full family realizing all admissible bimodal combinatorics. To this end, we parametrize the family by the two critical values. More precisely, for $(u,v)\in\Delta_*$ we construct a bimodal map $f_{u,v}^{(\ell)}$ satisfying
\[
f_{u,v}^{(\ell)}(d)=u, f_{u,v}^{(\ell)}(c)=v.
\]
We then restrict the standard full-family argument to the symmetric subfamily and obtain the existence of symmetric Fibonacci bimodal maps.

For the construction of the two-parameter family, fix $\ell>1$.
We omit the superscript $\ell$ and write $f_{u,v}=f^{(\ell)}_{u,v}$
and $F_a=f_{a,-a}$. Put $d=-1/3$, $c=1/3$, and define
\[
\Delta_*=\{(u,v):-1\le v<u\le1\},\ \Delta=\operatorname{int}\Delta_*.
\]
Define
\[
\rho_\ell(t)=
\begin{cases}
t|t|^{\ell-2},&t\neq0,\\
0,&t=0,
\end{cases}
\ q_\ell(x)=\rho_\ell(x-d)\rho_\ell(x-c).
\]
Then $q_\ell(x)>0$ on $(-1,d)\cup(c,1)$ and $q_\ell(x)<0$ on $(d,c)$. We shall construct a family
\[
f_{u,v}:[-1,1]\to[-1,1], (u,v)\in\Delta_*,
\]
such that
\[
f_{u,v}(-1)=-1, f_{u,v}(d)=u, f_{u,v}(c)=v, f_{u,v}(1)=1.
\]

The factor $q_\ell$ fixes the critical points, their orders, and the signs of the derivative. We shall use three bump functions to prescribe the critical values without changing the local critical behavior.

Choose non-negative $C^\infty$ functions $\eta_L,\eta_M,\eta_R$ satisfying
\begin{enumerate}
\item[(1)] $\operatorname{supp}\eta_L\Subset(-1,d),
\operatorname{supp}\eta_M\Subset(d,c),
\operatorname{supp}\eta_R\Subset(c,1)$;
\item[(2)] $\eta_R(x)=\eta_L(-x), \eta_M(x)=\eta_M(-x)$.
\end{enumerate}
Condition (1) keeps the bump functions away from the critical points, while condition (2) makes their choice compatible with reflection.

After multiplying these functions by positive constants, we may assume that
\[
\int_{-1}^{d}\eta_L(x)q_\ell(x)\,dx = -\int_d^c\eta_M(x)q_\ell(x)\,dx= \int_c^1\eta_R(x)q_\ell(x)\,dx=1.
\]
Put
\[
B_L=\int_{-1}^{d}q_\ell(x)\,dx,
B_M=-\int_d^cq_\ell(x)\,dx,
B_R=\int_c^1q_\ell(x)\,dx.
\]
Then $B_L=B_R>0$ and $B_M>0$.

For $(u,v)\in\Delta_*$, set
\[
\delta_L=u+1, \delta_M=u-v, \delta_R=1-v.
\]
These are the required increases on the left and right branches
and the required decrease on the middle branch, and all are positive on $\Delta_*$. Choose a positive continuous function $\varepsilon:\Delta_*\to(0,\infty)$ which extends continuously to $\overline{\Delta_*}$, satisfies $\varepsilon(-v,-u)=\varepsilon(u,v)$, and
\[
\varepsilon(u,v)B_i<\delta_i, \ i=L,M,R.
\]
For example, one can take
\[
\varepsilon(u,v)=\frac14\frac{\delta_L\delta_M\delta_R}{(1+B_L)(1+B_M)(1+B_R)(1+\delta_L)(1+\delta_M)(1+\delta_R)}.
\]
Define
\begin{align*}
a_L(u,v)&=\delta_L-\varepsilon(u,v)B_L,\\
a_M(u,v)&=\delta_M-\varepsilon(u,v)B_M,\\
a_R(u,v)&=\delta_R-\varepsilon(u,v)B_R,
\end{align*}
so that $a_L(u,v),a_M(u,v),a_R(u,v)>0$. Let
\[
W_{u,v}(x)=\varepsilon(u,v)+\sum_{i\in\{L,M,R\}}a_i(u,v)\eta_i(x),
\]
and define
\begin{equation}
\label{eq:explicit-full-family}
f_{u,v}(x)=-1+\int_{-1}^{x}W_{u,v}(t)q_\ell(t)\,dt.
\end{equation}

We collect the elementary properties of this family in the following lemma.

\begin{lem}
\label{lem:explicit-family}
For every $(u,v)\in\Delta_*$, the map $f_{u,v}$ is a positive bimodal map with critical points $d=-1/3$ and $c=1/3$. Moreover, the critical values are exactly
\[
f_{u,v}(d)=u, f_{u,v}(c)=v,
\]
and both critical points have critical order $\ell$. The map
\[
\Delta_*\ni(u,v)\longmapsto f_{u,v}\in C^1([-1,1])
\]
extends continuously to $\overline{\Delta_*}$. Furthermore,
\begin{equation}
\label{eq:family-symmetry}
f_{-v,-u}(-x)=-f_{u,v}(x).
\end{equation}
\end{lem}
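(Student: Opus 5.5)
The proof is a direct computation from \eqref{eq:explicit-full-family}, and I will verify the claims in the order stated.

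\emph{Sign structure and critical points.} On $\Delta_*$ we have $\varepsilon>0$ and $a_i>0$, and the $\eta_i$ are non-negative. Hence $W_{u,v}\ge\varepsilon(u,v)>0$ on $[-1,1]$. It follows that $f_{u,v}'=W_{u,v}q_\ell$ has the sign of $q_\ell$: positive on $(-1,d)\cup(c,1)$, negative on $(d,c)$, and zero only at $d,c$. Therefore $f_{u,v}$ is strictly increasing, then decreasing, then increasing, and $d,c$ are turning points.

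\emph{Values at the four points.} Since $\eta_M,\eta_R$ vanish on $(-1,d)$, the normalizations give
\[
f_{u,v}(d)=-1+\varepsilon B_L+a_L\cdot 1=-1+\delta_L=u .
\]
Similarly, on $(d,c)$ only $\eta_M$ contributes, so
\[
f_{u,v}(c)=u-\varepsilon B_M-a_M=u-\delta_M=v .
\]
Likewise $f_{u,v}(1)=v+\varepsilon B_R+a_R=v+\delta_R=1$, and $f_{u,v}(-1)=-1$ by definition. Because $f_{u,v}$ is monotone on each lap with these endpoint values, it maps $[-1,1]$ into $[-1,1]$ and fixes $\pm1$, so it is positive bimodal. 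It is $C^1$, and it is $C^\infty$ (in particular $C^3$) away from $d,c$, since $\rho_\ell$ is smooth off $0$.

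\emph{Critical order.} This is the only step that is not pure bookkeeping. Near $d$, the bumps vanish because their supports are compactly inside the open laps, so $W_{u,v}\equiv\varepsilon$ there. Thus
\[
f_{u,v}'(x)=\varepsilon\,\rho_\ell(x-c)\,\rho_\ell(x-d)=h(x)\,\rho_\ell(x-d),
\]
where $h$ is smooth and nonvanishing near $d$. Integrating gives
\[
f_{u,v}(x)-u=\int_d^x h(t)\,|t-d|^{\ell-2}(t-d)\,dt .
\]
Substituting $t=d+s(x-d)$ yields $|x-d|^{\ell}G(x)$ with $G(x)=\int_0^1 h(d+s(x-d))\,s^{\ell-1}\,ds$, which is smooth and $G(d)=h(d)/\ell\ne0$. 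Then $\phi(y)=d+y\,\gamma(y)$, with $\gamma$ defined by $|\gamma|^\ell=|G|^{-1}$ locally, and an affine $\psi$ give $|\psi^{-1}\circ f\circ\phi(y)|=|y|^\ell$. To obtain this I will set $\tilde y=(x-d)|G(x)|^{1/\ell}$, which is a local smooth diffeomorphism with derivative $|G(d)|^{1/\ell}\ne0$ at $d$, and take $\phi$ to be its inverse. The same argument works at $c$.

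\emph{Continuity in parameters.} The quantities $\delta_i$ and $\varepsilon$ (by hypothesis), and hence $a_i$, extend continuously to $\overline{\Delta_*}$. Consequently $W_{u,v}$ depends continuously on $(u,v)$ in sup norm. Then $f_{u,v}'=W_{u,v}q_\ell$ and $f_{u,v}$ depend continuously in sup norm, giving continuity into $C^1$.

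\emph{Symmetry.} The map $(u,v)\mapsto(-v,-u)$ preserves $\Delta_*$ and swaps $\delta_L\leftrightarrow\delta_R$ while fixing $\delta_M$. Since $\varepsilon$ is invariant and $B_L=B_R$, it follows that $a_L(-v,-u)=a_R(u,v)$ and $a_M(-v,-u)=a_M(u,v)$. Combining this with $\eta_R(x)=\eta_L(-x)$ and $\eta_M$ even gives $W_{-v,-u}(-x)=W_{u,v}(x)$. Moreover, $q_\ell$ is even, since $\rho_\ell$ is odd and $q_\ell(-x)=\rho_\ell(-x-d)\rho_\ell(-x-c)=\rho_\ell(x-c)\rho_\ell(x-d)$. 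Hence $g(x):=-f_{-v,-u}(-x)$ satisfies $g'(x)=W_{u,v}(x)q_\ell(x)=f_{u,v}'(x)$. Also $g(-1)=-f_{-v,-u}(1)=-1=f_{u,v}(-1)$, so $g=f_{u,v}$, which is \eqref{eq:family-symmetry}.
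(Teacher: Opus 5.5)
Your proof is correct and follows essentially the same route as the paper. The lap structure comes from the sign of $W_{u,v}q_\ell$, the critical values from the bump normalizations, the critical order from $W_{u,v}\equiv\varepsilon$ near $d,c$, the $C^1$ extension from uniform continuity of the coefficients, and the symmetry from $W_{-v,-u}(-x)=W_{u,v}(x)$, the evenness of $q_\ell$ and the endpoint normalization. Your explicit construction of the coordinate change near the critical points fills in a step the paper only states.
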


\begin{proof}
Differentiating \eqref{eq:explicit-full-family} gives
\[
Df_{u,v}(x)=W_{u,v}(x)q_\ell(x).
\]
Since $W_{u,v}>0$, we have $Df_{u,v}>0$ on $(-1,d)\cup(c,1)$ and $Df_{u,v}<0$ on $(d,c)$. Thus $d$ is a local maximum, $c$ is a local minimum, and these are the only critical points.

By the definition of $f_{u,v}$ and the normalization of $\eta_L,\eta_M,\eta_R$, we have
\begin{align*}
f_{u,v}(-1)&=-1,\\
f_{u,v}(d)&=-1+\varepsilon(u,v)B_L+a_L(u,v)=u,\\
f_{u,v}(c)&=u-\varepsilon(u,v)B_M-a_M(u,v)=v,\\
f_{u,v}(1)&=v+\varepsilon(u,v)B_R+a_R(u,v)=1.
\end{align*}
Since each branch is monotone, $f_{u,v}([-1,1])=[-1,1]$.

Since $W_{u,v}\equiv\varepsilon(u,v)>0$ near $d$ and $c$, integration of the local expression for $q_\ell$ gives, for $|t|$ small,
\[
u-f_{u,v}(d+t)=|t|^\ell H_d(t),\ f_{u,v}(c+t)-v=|t|^\ell H_c(t),
\]
where $H_d$ and $H_c$ are smooth and strictly positive near $0$ (with $(u,v)$ fixed). Thus both critical points have critical order $\ell$.

Since $\ell>1$, the function $q_\ell$ is continuous. The coefficients of $W_{u,v}$ extend continuously to $\overline{\Delta_*}$, so both $f_{u,v}$ and $Df_{u,v}=W_{u,v}q_\ell$ depend continuously on $(u,v)$ in the uniform norm. This proves the $C^1$ extension.

Finally, $q_\ell$ is even. Moreover,
\[
\delta_L(-v,-u)=\delta_R(u,v), \delta_M(-v,-u)=\delta_M(u,v), \delta_R(-v,-u)=\delta_L(u,v).
\]
Using $B_L=B_R$, the symmetry of $\varepsilon$, and the choice $\eta_R(x)=\eta_L(-x)$ and $\eta_M(x)=\eta_M(-x)$, we obtain
\[
W_{-v,-u}(-x)=W_{u,v}(x).
\]
Together with the endpoint normalization, this gives
\[
f_{-v,-u}(-x)=-f_{u,v}(x).
\]
\end{proof}

The family in Lemma~\ref{lem:explicit-family} is full. In the notation of \cite{GvS}, the space of admissible critical-value pairs is $V=\Delta_*$, including the edges $u=1$ and $v=-1$. The critical-value map is
\[
\operatorname{CV}:\Delta_*\longrightarrow V, \ \operatorname{CV}(u,v)=\left(f_{u,v}(d),f_{u,v}(c)\right)=(u,v).
\]
Thus $\operatorname{CV}$ is the identity. Since the turning points are fixed and the family extends continuously to $\overline{\Delta_*}$ in $C^1$, \cite[Theorem~1]{GvS} shows that $\{f_{u,v}\}_{(u,v)\in\Delta_*}$ is full.

\begin{prop}
\label{prop:existence-symmetric-fibonacci}
For each $\ell>3$, there exists $a_\ell\in(0,1)$ such that
$F_{a_\ell}:=f_{a_\ell,-a_\ell}$ is a positive symmetric
bimodal map with Fibonacci combinatorics. Its critical points are
$d=-1/3$ and $c=1/3$, both of critical order $\ell$.
\end{prop}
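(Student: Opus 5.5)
The plan is to work entirely inside the symmetric diagonal slice $\{F_a=f_{a,-a}: a\in[-1,1]\}$ of the full family of Lemma~\ref{lem:explicit-family}, and to reduce the question to a one-parameter intermediate-value argument on kneading invariants. First, by \eqref{eq:family-symmetry} with $v=-u=-a$, each $F_a$ is odd, and $F_a(d)=a$, $F_a(c)=-a$. Oddness gives $F_a^n(-x)=-F_a^n(x)$, hence $\iota_{F_a}(F_a(c))=\tau(\iota_{F_a}(F_a(d)))$, since $x\mapsto -x$ exchanges $I_L$ and $I_R$, fixes $I_M$ and swaps $d$ and $c$. So every $F_a$ is combinatorially symmetric, and its kneading invariant is determined by the single sequence $\iota(a):=\iota_{F_a}(a)$. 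Conversely, by the description in \cite[Section 4]{V}, the Fibonacci kneading invariant $K_{\rm Fib}=(\kappa,\tau(\kappa))$ of a positive map is combinatorially symmetric. The task therefore reduces to finding $a\in(0,1)$ with $\iota(a)=\kappa$. The critical order $\ell$ is automatic from Lemma~\ref{lem:explicit-family}, and positivity of $F_a$ follows because $\pm1$ are fixed.

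Second, I will realize $\kappa$ by a one-dimensional full-family argument. The fullness of $\{f_{u,v}\}$ from \cite[Theorem~1]{GvS}, applied to the admissible pair $K_{\rm Fib}$, gives some $(u,v)$ realizing it, but not necessarily on the diagonal. I intend to redo the argument of \cite{GvS,MS} along the diagonal in the usual unimodal style. Order the $\tau$-symmetric admissible sequences by the signed lexicographic order on $\{L,D,M,C,R\}$, where the sign flips after each $M$, the orientation-reversing branch. Then define
\[
a_\ell=\sup\{a\in[0,1]:\ \iota(a)\preceq\kappa\}.
\]
At $a=0$ we have $F_0(d)=0$, and the critical orbit is trivial and lies below $\kappa$. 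At $a=1$ the critical value hits the fixed endpoint $1$, giving $R^\infty$, which lies above $\kappa$. Kneading sequences vary monotonically in the lexicographic order in the appropriate sense: if $\iota(a)$ and $\iota(a')$ first differ at position $n$, then some $F_b^n(d)$, with $b$ between $a$ and $a'$, equals $d$ or $c$. Together with continuity of $a\mapsto F_a$ in $C^1$, this lets the standard semicontinuity argument show that at $a=a_\ell$ the itinerary coincides with $\kappa$. Since $\kappa$ is non-periodic, no $F_a^n(d)$ ever equals $d$ or $c$ at $a_\ell$, so one-sided limits agree.

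The main obstacle is the symmetric constraint. A priori, a symmetric kneading sequence realized in the two-parameter family need not be realized on the diagonal, and the admissibility conditions in \cite{MS} for the bimodal case involve both critical orbits. I would handle this by noting that, for odd maps, the condition that $F_a^n(d)=c$ is the same as $F_a^n(d)=-d$. Hence the one-dimensional bifurcation set is governed only by the orbit of $d$, which lets the unimodal-style argument go through. The hypothesis $\ell>3$ should not matter combinatorially. I expect it is there to ensure $C^3$ regularity of $q_\ell$ away from the critical points and a non-flat $C^3$ local form near them, since $\rho_\ell$ is $C^3$ at $0$ only when $\ell>3$. I will check this regularity separately, and then state the result with $a_\ell\in(0,1)$, where $a_\ell\neq0,1$ by the endpoint computations above.
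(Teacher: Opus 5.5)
Your diagonal intermediate-value strategy is a genuinely different route from the paper's. As written, however, it has a gap at exactly the step that carries the content.

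With $a_\ell=\sup\{a:\iota(a)\preceq\kappa\}$, local constancy of itinerary prefixes yields $\iota(a_\ell)=\kappa$ only if the orbit of $d$ under $F_{a_\ell}$ never meets $\{d,c\}$. Your justification (``since $\kappa$ is non-periodic, no $F_a^n(d)$ ever equals $d$ or $c$ at $a_\ell$'') presupposes $\iota(a_\ell)=\kappa$, which is what you are trying to prove.

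The case you must exclude is a superattracting $a_\ell$, that is, $F_{a_\ell}^j(d)=\chi\in\{d,c\}$ for a first $j\ge1$. Let $A$ be the common prefix of length $j-1$. Let $X_\pm$ be the two symbols adjacent to $D$ (if $\chi=d$) or to $C$ (if $\chi=c$) in the order $L<D<M<C<R$. By oddness and the strong contraction of $F_a^j$ near $\chi$, every $a$ close to $a_\ell$ has $\iota(a)\in\{\iota(a_\ell),\lambda_-,\lambda_+\}$. Here $\lambda_\pm=(AX_\pm)^\infty$ if $\chi=d$, and $\lambda_\pm=(AX_\pm\,\tau(A)\tau(X_\pm))^\infty$ if $\chi=c$. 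Nothing in your argument prevents the non-periodic $\kappa$ from lying strictly between $\lambda_-$ and $\lambda_+$. In that case the supremum sits at a superattracting parameter and $\kappa$ is never realized. Your remark that bifurcations of odd maps are governed by the orbit of $d$ alone does not address this.

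The missing ingredient is admissibility. $K_{\mathrm{Fib}}$ is realized by the (non-symmetric) map $g$ of the two-parameter full family. Monotonicity of itineraries on laps therefore gives $\sigma^{n+1}\kappa\preceq\kappa$ whenever $\kappa_n\in\{L,M\}$, and $\sigma^{n+1}\kappa\succeq\tau(\kappa)$ whenever $\kappa_n\in\{M,R\}$. Comparing $\sigma^{j}\kappa$ with $\kappa$ (if $\chi=d$) or with $\tau(\kappa)$ (if $\chi=c$) then shows that such a $\kappa$ cannot lie strictly between $\lambda_-$ and $\lambda_+$. This is the bimodal analogue of the classical unimodal step, and you need to write it out. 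With it, your sup argument closes and gives a purely one-dimensional alternative to the paper's proof.

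You should also state that $\ell>3$ is used, through $C^3$ regularity, to invoke the absence of wandering intervals \cite{vSV}. That is what turns the Fibonacci kneading invariant of $F_{a_\ell}$ into recurrent critical points, and hence into Fibonacci combinatorics in the sense of the definition.

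The paper avoids the admissibility lemma altogether. It closes the critical orbits of $g$ symmetrically and realizes these finite combinatorics by odd maps $F_{a_k}$, using the pullback on the invariant subsimplex $W_k^{\rm sym}$. It then excludes the analogous degeneration in the limit because the critical periods of $F_{a_k}$ tend to infinity, so they cannot be captured by a fixed superattracting cycle.
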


\begin{proof}
Fix $\ell>3$. Since the family $\{f_{u,v}\}_{(u,v)\in\Delta_*}$ in Lemma~\ref{lem:explicit-family} is full, there is
$(u_0,v_0)\in\Delta_*$ such that $g:=f_{u_0,v_0}$ has the
Fibonacci kneading invariant. If $u_0=1$ or $v_0=-1$, a critical point maps to a fixed endpoint, contrary to this kneading invariant. Hence $(u_0,v_0)\in\Delta$. Note that $g$ is not assumed to be symmetric. Since $\ell>3$, $g$ is $C^3$ with non-flat critical points. Hence, by \cite[Theorem~A]{vSV}, $g$ has no wandering
intervals.
Since $g$ has Fibonacci combinatorics and no wandering intervals, both critical points are recurrent.

Let $U^0\supset U^1\supset\cdots$ and
$V^0\supset V^1\supset\cdots$ be the twin principal nest of $g$, where
$d\in U^k$ and $c\in V^k$, and let $\phi_k$ be the first return map to
$U^{k-1}\cup V^{k-1}$. By Fact 2.1, 
\begin{equation}
\phi_k|_{U^k}=g^{S_k}, \ \phi_k|_{V^k}=g^{S_k},
\label{eq:return-rule}
\end{equation}
where $S_1=2$, $S_2=3$, and $S_{k+1}=S_k+S_{k-1}$. Since $\omega(c) = \omega(d)$ is a minimal Cantor set and non-existence of wandering intervals, we have
\begin{equation}
|U^k| \longrightarrow0, \ |V^k| \longrightarrow0.
\label{eq:lengths-vanish}
\end{equation}

\noindent{\bf Step 1. Approximation of $g$.}

We shall approximate $g$ by maps $g_k$ whose critical orbits are finite and follows the initial combinatorics of $g$ for increasingly many iterates. For each $k$, let
\begin{equation}
\begin{aligned}
\alpha_k&=\begin{cases}
c,&\text{if $\phi_k$ is of Type $\mathcal A$ or Type $\mathcal C$},\\
d,&\text{if $\phi_k$ is of Type $\mathcal B$},
\end{cases}\\[4pt]
\beta_k&=\begin{cases}
d,&\text{if $\phi_k$ is of Type $\mathcal A$ or Type $\mathcal C$},\\
c,&\text{if $\phi_k$ is of Type $\mathcal B$}.
\end{cases}
\end{aligned}
\label{eq:alpha-beta}
\end{equation}
It is clear that in Types $\mathcal A$ and $\mathcal C$, 
$g^{S_k}(d)\in V^{k-1}$ and $g^{S_k}(c)\in U^{k-1}$, while in Type $\mathcal B$
the two target domains are reversed.
Hence, by \eqref{eq:lengths-vanish},
\begin{equation}
|g^{S_k}(d)-\alpha_k|+|g^{S_k}(c)-\beta_k|\longrightarrow0.
\label{eq:cv-approach}
\end{equation}
Set 
\[
p_k=g^{S_k-1}(d) \mbox{ and } q_k=g^{S_k-1}(c).
\]
The following claim shows how to close the critical orbits while
preserving the symmetry of their left-to-right order.

\begin{claim}
\label{claim:symmetric-closing}
For every sufficiently large $k$, there is a bimodal map $g_k$ with
the same turning points and the same orientation on each lap as $g$
such that $\|g_k-g\|_\infty\longrightarrow 0$ as $k\to\infty$ and the following properties hold:
\begin{enumerate}[label=(\arabic*),ref=\arabic*,itemsep=6pt,parsep=0pt,topsep=6pt]
\item\label{item:closing-values} $g_k(p_k)=\alpha_k$, $g_k(q_k)=\beta_k$;
\item\label{item:closing-orbits} $g_k^j(d)=g^j(d)$, $g_k^j(c)=g^j(c)$ for $0\le j<S_k$;
\item\label{item:closing-terminal} $g_k^{S_k}(d)=\alpha_k$, $g_k^{S_k}(c)=\beta_k$.
\end{enumerate}
Moreover, the pairing $g_k^j(d)\leftrightarrow g_k^j(c)$, $0\le j<S_k$, has the following properties:
\begin{enumerate}[label=(\arabic*),ref=\arabic*,start=4,itemsep=6pt,parsep=0pt,topsep=6pt]
\item\label{item:closing-order} It reverses the order of all marked points; in particular, for $0\le i,j<S_k$,
\[
g_k^i(d)<g_k^j(c) \Longrightarrow g_k^i(c)>g_k^j(d).
\]
\item\label{item:closing-pairing} For $0\le j<S_k-1$, $g_k$ sends the pair $(g_k^j(d),g_k^j(c))$ to $(g_k^{j+1}(d),g_k^{j+1}(c))$, while $(p_k,q_k)$ is sent to $(c,d)$ in Types $\mathcal A$ and $\mathcal C$, and to $(d,c)$ in Type $\mathcal B$.
\end{enumerate}
\end{claim}

\begin{proof}

We first prove (\ref{item:closing-values})--(\ref{item:closing-terminal}) by modifying $g$ near $p_k$ and $q_k$ while keeping its values at all earlier critical-orbit points unchanged.

By the first-return property, no critical-orbit point $g^j(d)$ or $g^j(c)$ with $0\le j<S_k$ lies strictly between $g(p_k)$ and $\alpha_k$,
or between $g(q_k)$ and $\beta_k$. Let $U_{k,d}$ and $U_{k,c}$ denote the monotonicity intervals of $g$
containing $p_k$ and $q_k$, respectively.
For sufficiently large $k$, choose small disjoint intervals
\[
K_{k,d}\subset g(U_{k,d}), \ K_{k,c}\subset g(U_{k,c}),
\]
such that
\[
\{g(p_k),\alpha_k\}\subset\operatorname{int}K_{k,d},
 \ 
\{g(q_k),\beta_k\}\subset\operatorname{int}K_{k,c}.
\]
Choose these intervals so that they contain none of the earlier orbit values $g^j(d),g^j(c)$, $1\le j<S_k$. The absence of these values between the respective endpoints, together with \eqref{eq:cv-approach}, allows us to choose $|K_{k,d}|,|K_{k,c}|\to0$.

Choose an increasing $C^3$ diffeomorphism
$h_{k,d}:K_{k,d}\to K_{k,d}$ that equals the identity near the boundary and satisfies $h_{k,d}(g(p_k))=\alpha_k$.
On the branch containing $p_k$, replace $g(x)$ by $h_{k,d}(g(x))$ wherever $g(x)\in K_{k,d}$, leaving $g$ unchanged elsewhere.
Make the analogous modification near $q_k$, sending $g(q_k)$ to $\beta_k$. These modifications preserve the turning points, the lap orientations, and the earlier orbit, giving properties (\ref{item:closing-values})--(\ref{item:closing-terminal}) and $\|g_k-g\|_\infty\leq\max\{|K_{k,d}|,|K_{k,c}|\}\to0$. Both critical points are then periodic and belong to attracting cycles.

To prove (\ref{item:closing-order}), we use the combinatorial symmetry of $g$: $\tau$ (given in subsection 2.2) exchanges the itineraries of corresponding critical-orbit points. Until the itineraries of $x$ and $y$ first differ, the corresponding
branches have the same direction of monotonicity. At the first difference,
$\tau$ puts the left-hand symbol on the right and the right-hand symbol
on the left. Thus the pairing reverses the order of points with distinct itineraries.

The itinerary map $x\mapsto\iota_g(x)$ is one-to-one on the critical orbits of $g$. Indeed, if distinct points $x,y$ on critical orbits had the same itinerary, every iterate of $g$ would be monotone on the closed interval with endpoints $x,y$. By the Homterval Lemma \cite[Lemma~II.3.1]{MS}, this interval would either be wandering or have every point in the basin of a periodic orbit. The first alternative is excluded by the absence of wandering intervals. The second is also impossible: $x$ and $y$ are recurrent but not periodic, whereas a recurrent point that tends to a periodic orbit must itself belong to that orbit.
Thus the pairing of the two critical orbits is well-defined and reverses their actual order.
By (\ref{item:closing-orbits}), the marked points of $g_k$ coincide with those of $g$, so this pairing also reverses their order, proving (\ref{item:closing-order}).

Finally, for $0\le j<S_k-1$, applying $g_k$ to $(g_k^j(d),g_k^j(c))$ gives $(g_k^{j+1}(d),g_k^{j+1}(c))$.
At the terminal pair $(p_k,q_k)$, property (\ref{item:closing-values}) gives the image $(\alpha_k,\beta_k)$, which is $(c,d)$ in Types $\mathcal A$ and $\mathcal C$ and $(d,c)$ in Type $\mathcal B$ by \eqref{eq:alpha-beta}.
This proves (\ref{item:closing-pairing}) and completes the proof.
\end{proof}

\noindent{\bf Step 2. Critical finite case.}

Write 
\[
P_k=\bigcup_{j\ge0}\{g_k^j(d),g_k^j(c)\}
=\{z_1<\cdots<z_{N_k}\}.
\]
Define $\pi_k:\{1,\ldots,N_k\}\to\{1,\ldots,N_k\}$ by $g_k(z_i)=z_{\pi_k(i)}$, and set $\sigma_k(i)=N_k+1-i$.
By Claim~\ref{claim:symmetric-closing}(\ref{item:closing-order}), $z_{\sigma_k(i)}$ is the point paired with $z_i$; property (\ref{item:closing-pairing}) then gives
\begin{equation}
\sigma_k\circ\pi_k=\pi_k\circ\sigma_k.
\label{eq:pi-sigma}
\end{equation}

We now apply Step~1 of the proof of the full-family theorem
\cite[Chapter~II, Section~4]{MS} to this finite critical-orbit
combinatorics. As in that proof, we collapse the components of the basins of inessential periodic attractors of $g_k$, together with non-degenerate intervals of periodic points and their preimages. Every point of $P_k$ lies on a superattracting cycle containing $d$ or $c$, so it belongs to none of these sets. Thus the reduction does not identify distinct points of $P_k$ and preserves their order, the map $\pi_k$, and the pairing $\sigma_k$.

Moreover, $\pi_k$ is a permutation of $\{1,\ldots,N_k\}$. Let $t_d,t_c$ be the indices for which $z_{t_d}=d$ and $z_{t_c}=c$.
Every cycle of $\pi_k$ contains $t_d$ or $t_c$; equivalently, for each $i$ there is an integer $n\ge0$ such that $\pi_k^n(i)\in\{t_d,t_c\}$, or $g_k^n(z_i)\in\{d,c\}$.
Hence, for each adjacent pair $z_i<z_{i+1}$, we can choose $n\ge0$ such that
\[
g_k^n(z_i)\in\{d,c\}, \ g_k^n(z_{i+1})\ne g_k^n(z_i),
\]
where the inequality follows from the injectivity of $\pi_k^n$.
Thus the iterated pair has a turning point as one endpoint and a distinct marked point as the other. This is precisely
the claim used in Step~1 of \cite[Theorem~II.4.1]{MS} to prove its
boundary estimate.

Let
\[
W_k=\{(x_1,\ldots,x_{N_k}):-1<x_1<\cdots<x_{N_k}<1\}
\]
be the corresponding configuration simplex, and define the involution
\begin{equation}
\begin{aligned}
\mathcal R_k:W_k&\longrightarrow W_k, \ (\mathcal R_kx)_i=-x_{\sigma_k(i)},\\
\mathcal R_k(x_1,\ldots,x_{N_k})&=(-x_{N_k},\ldots,-x_1), \ \mathcal R_k^2=\mathrm{id}_{W_k}.
\end{aligned}
\label{eq:reflect}
\end{equation}
Every point of $P_k$ is paired with a distinct point of $P_k$ under the correspondence in Claim~\ref{claim:symmetric-closing}, so $\sigma_k$ has no fixed point and $N_k$ is even.
Set $m_k=N_k/2$, the number of pairs. The fixed-point set
$W_k^{\rm sym}:=\operatorname{Fix}(\mathcal R_k)$ consists exactly of
the points
\[
(-y_{m_k},\ldots,-y_1,y_1,\ldots,y_{m_k})
\ (0<y_1<\cdots<y_{m_k}<1),
\]
so it is naturally an open $m_k$-simplex.

We define the pullback map $T_k:W_k\to W_k$ as in \cite{MS}. For $x=(x_1,\ldots,x_{N_k})\in W_k$, set $u=x_{\pi_k(t_d)}$ and $v=x_{\pi_k(t_c)}$.
Since $g_k(d)>g_k(c)$, we have $\pi_k(t_d)>\pi_k(t_c)$, so $(u,v)\in\Delta$.
The critical-value map in Lemma~\ref{lem:explicit-family} is the identity, so $f_{u,v}$ is the unique member of the family with these critical values.
Define $T_k(x)=(y_1,\ldots,y_{N_k})$ by $y_{t_d}=d$, $y_{t_c}=c$ and, for $i\notin\{t_d,t_c\}$, by
\[
f_{u,v}(y_i)=x_{\pi_k(i)}, \ 
y_i\in
\begin{cases}
(-1,d),&i<t_d,\\
(d,c),&t_d<i<t_c,\\
(c,1),&i>t_c.
\end{cases}
\]
The prescribed order of the marked points ensures that
$x_{\pi_k(i)}$ lies in the image of this branch.
Since the branch is strictly monotone, there is a unique point $y_i$
on it satisfying $f_{u,v}(y_i)=x_{\pi_k(i)}$. The branch orientations also give $y_1<\cdots<y_{N_k}$, so $T_k(x)\in W_k$; the inverse branches depend continuously on $x$, and hence $T_k$ is continuous.

If $x\in W_k^{\rm sym}$, then \eqref{eq:pi-sigma} and
$\sigma_k(t_d)=t_c$ give
$x_{\pi_k(t_c)}=-x_{\pi_k(t_d)}$. Thus the two prescribed critical
values are $(a,-a)$ with $0<a<1$, so the pullback uses exactly the map $f_{a,-a}$.

For any $x\in W_k$, write $y=T_k(x)$ and $\widehat x=\mathcal R_kx$.
Set $u=x_{\pi_k(t_d)}$ and $v=x_{\pi_k(t_c)}$.
Then the parameters corresponding to $\widehat x$ are $(-v,-u)$.
By \eqref{eq:family-symmetry} and \eqref{eq:pi-sigma},
\[
f_{-v,-u}(-y_i)=-f_{u,v}(y_i)=-x_{\pi_k(i)}
=\widehat x_{\pi_k(\sigma_k(i))}.
\]
Since reflection exchanges the outer branches and fixes the middle branch, $-y_i$ lies in the required branch for $\sigma_k(i)$ when $i\notin\{t_d,t_c\}$.
Uniqueness of the preimage gives $T_k(\widehat x)_{\sigma_k(i)}=-y_i$; the same equality holds at $t_d,t_c$ since $c=-d$.
Hence $T_k\circ\mathcal R_k=\mathcal R_k\circ T_k$, and therefore
\begin{equation}
T_k(W_k^{\rm sym})\subset W_k^{\rm sym}.
\label{eq:t-invariant}
\end{equation}

Put $x_0=-1$, $x_{N_k+1}=1$, and 
\[
\delta_k(x)=\min_{0\le i\le N_k}(x_{i+1}-x_i).
\]
For $x\in W_k^{\rm sym}$, symmetry gives $x_{i+1}-x_i=x_{N_k-i+1}-x_{N_k-i}$, so each gap equals its reflected gap.
Thus approaching the relative boundary of $W_k^{\rm sym}$ is equivalent to $\delta_k(x)\to0$, as for the boundary of $W_k$.
In the Euclidean metric, uniformly in $k$ and $x\in W_k^{\rm sym}$,
\begin{equation}
\frac12\,\operatorname{dist}(x,\partial W_k)
\le
\operatorname{dist}\bigl(x,\partial_{\rm rel}W_k^{\rm sym}\bigr)
\le
2\,\operatorname{dist}(x,\partial W_k).
\label{eq:dist-bound}
\end{equation}
Indeed, each distance lies between $\delta_k(x)/\sqrt2$ and $\sqrt2\delta_k(x)$.
For the relative boundary, a smallest gap is closed together with its reflected gap; the central gap is closed on its own.

By \eqref{eq:dist-bound} and \cite[Lemma~II.4.1]{MS}, for any sequence $x_n\in W_k^{\rm sym}$ approaching the relative boundary,
\[
\frac{\lvert T_k(x_n)-x_n\rvert}
{\operatorname{dist}(x_n,\partial_{\rm rel}W_k^{\rm sym})}
\longrightarrow\infty.
\]
The direction of $T_k(x)-x$ points inward as $x\in W_k^{\rm sym}$ approaches the relative boundary, in the sense used in the proof of \cite[Lemma~II.4.2]{MS}.
By \eqref{eq:t-invariant}, the deformation argument applies in $W_k^{\rm sym}$ and gives a fixed point $x_k^*\in W_k^{\rm sym}$.
The corresponding parameter is $(a_k,-a_k)$ with $0<a_k<1$, so $F_k:=F_{a_k}$ realizes the finite critical-orbit combinatorics of $g_k$.
Since the unchanged initial segments of the critical orbits have no critical relations, both critical points have period $S_k$ in Type $\mathcal B$ and $2S_k$ in Types $\mathcal A$ and $\mathcal C$.
These periods tend to infinity as $k \to \infty$.

\noindent{\bf Step 3. Passage to the limit.}

Passing to a subsequence, assume $a_k\to a_\ell\in[0,1]$. By Lemma~\ref{lem:explicit-family}, $F_k\to F_{a_\ell}$ in $C^1$.
We shall show that $F_{a_\ell}$ has Fibonacci combinatorics. As in Step~3 of the proof of \cite[Theorem~II.4.1]{MS}, we first show that distinct points on the critical orbits of $g$ remain distinct in the limit.

\begin{claim}
For integers $r,s\ge0$ and $\xi,\zeta\in\{d,c\}$, if $g^r(\xi)\ne g^s(\zeta)$, then $F_{a_\ell}^r(\xi)\ne F_{a_\ell}^s(\zeta)$.
Moreover, $F_{a_\ell}^j(\xi)\in(-1,1)$ for every $j\ge0$ and $\xi\in\{d,c\}$.
\end{claim}

\begin{proof}
Fix $r,s$ and $\xi,\zeta$ as in the claim. For $S_k>\max\{r,s\}$, Claim~\ref{claim:symmetric-closing}(\ref{item:closing-orbits}) and the realization of the marked points give $g^r(\xi)<g^s(\zeta)\Rightarrow F_k^r(\xi)<F_k^s(\zeta)$.
Passing to the limit gives
$F_{a_\ell}^r(\xi)\le F_{a_\ell}^s(\zeta)$.
Suppose that equality holds, we shall derive a contradiction.

Let $J$ be the open interval between $g^r(\xi)$ and $g^s(\zeta)$, which is not a homterval. Let $n\ge0$ be the first time that $g^n(J)$ contains $d$ or $c$ in its interior. All earlier images lie in monotonicity intervals.

Let $J_k$ be the interval between $F_k^r(\xi)$ and $F_k^s(\zeta)$.
For $S_k>\max\{r,s\}+n$, Claim~\ref{claim:symmetric-closing}(\ref{item:closing-orbits}) and the order of the marked points imply that $F_k^n(J_k)$ contains the same turning point.
By assumption, $|J_k|\to0$. Since $F_k\to F_{a_\ell}$ in $C^1$, the maps $F_k^n$ are equicontinuous, so $|F_k^n(J_k)|\to0$.
Thus both endpoints converge to that turning point.

Denote this turning point by $\chi\in\{d,c\}$. Then $F_{a_\ell}^{r+n}(\xi)=F_{a_\ell}^{s+n}(\zeta)=\chi$. At least one of $r+n$ and $s+n$ is positive: otherwise $r=s=n=0$ and $J=(d,c)$, which contains no turning point. Thus some positive iterate of a critical point is a critical point. Since every $F_k$ is odd, so is $F_{a_\ell}$; as $d=-c$, this gives $F_{a_\ell}^{m}(c)=\pm c$ for some $m\ge1$, and hence $F_{a_\ell}^{2m}(c)=c$.

Let $p$ be the period of $c$ under $F_{a_\ell}$. Its cycle is superattracting.
Choose a small open interval $H$ around a point of the cycle such that
\[
F_{a_\ell}^p(\overline H)\subset H, \  \sup_{x\in\overline H}|D(F_{a_\ell}^p)(x)|<\frac12.
\]
Thus $H$ is a trapping neighborhood for the return map $F_{a_\ell}^p$: every $p$ iterates stay in $H$ and converge to its fixed point.
By $C^1$ convergence, for all sufficiently large $k$ we also have $F_k^p(\overline H)\subset H$ and $\sup_{\overline H}|D(F_k^p)|<3/4$.
Choose $N\ge0$ with $F_{a_\ell}^N(c)\in H$. Since $F_k^N(c)\to F_{a_\ell}^N(c)$, the same $N$ gives $F_k^N(c)\in H$ for all sufficiently large $k$.
The contraction $F_k^p|_{\overline H}$ therefore attracts this point to its unique fixed point, which belongs to a cycle of $F_k$ of period dividing $p$.
But it is itself periodic, with period at least $S_k\to\infty$; a periodic point attracted to a periodic cycle must belong to that cycle.
This contradiction shows that the two limiting points are distinct.

Suppose $F_k^r(\xi)\to e$ for some fixed $r\ge0$, $\xi\in\{d,c\}$, and $e\in\{-1,1\}$.
Let $J$ be the interval between $g^r(\xi)$ and $e$.
The endpoint is fixed by Lemma~\ref{lem:explicit-family}.
The interval is not a homterval: it cannot wander with a fixed endpoint, and its recurrent, non-periodic endpoint cannot tend to a periodic orbit.
Let $n$ be the first time that $g^n(J)$ contains a turning point $\chi$ in its interior. For all sufficiently large $k$, the corresponding interval between $F_k^r(\xi)$ and $e$ has an $n$-th image containing $\chi$. Its diameter tends to zero, while its endpoint $F_k^n(e)=e$ is fixed. This would give $\chi=e$, contrary to $\chi\in\{d,c\}\subset(-1,1)$.
This proves the claim.
\end{proof}

By the claim, distinct critical-orbit points of $g$ remain distinct and keep their order in the limit.
Thus $F_{a_\ell}$ has the same kneading invariant as $g$.
If $a_\ell=0$, the critical values coincide; if $a_\ell=1$, they are endpoints. Both cases contradict the claim, so $0<a_\ell<1$. Then $F_{a_\ell}$ has Fibonacci combinatorics.

Finally, \eqref{eq:family-symmetry} gives $F_{a_\ell}(-x)=-F_{a_\ell}(x)$.
By Lemma~\ref{lem:explicit-family}, $F_{a_\ell}$ is positive and has exactly two critical points, $d=-1/3$ and $c=1/3$, both of order $\ell$.
\end{proof}

For each even integer $\ell \geq 4$, one can construct a polynomial family in a similar way. Set $m=\ell-1$ and define
\[
C_m=\int_0^1[t(1-t)]^m\,dt, \ A_m(s)=\int_0^s[t(t+1)]^m\,dt \ (s\ge0).
\]
For $(u,v)\in\Delta_*$, let $\alpha,\beta>0$ be determined by
\[
A_m(\alpha)=C_m\frac{u+1}{u-v}, \ A_m(\beta)=C_m\frac{1-v}{u-v}.
\]
These numbers are unique since $A_m$ is strictly increasing from $0$ to $\infty$. Set
\[
h=\frac{2}{1+\alpha+\beta},\ d_{u,v}=-1+\alpha h,\ c_{u,v}=1-\beta h,
\]
so that $c_{u,v}-d_{u,v}=h$, and put $\lambda=(u-v)/(C_mh^{2m+1})$. Define
\[
P_{u,v}(x)=-1+\lambda\int_{-1}^x[(t-d_{u,v})(t-c_{u,v})]^m\,dt.
\]
Direct computation gives
\[
P_{u,v}(-1)=-1, \ P_{u,v}(d_{u,v})=u, \ P_{u,v}(c_{u,v})=v, \ P_{u,v}(1)=1.
\]
Since $m$ is odd and $P_{u,v}'(x)=\lambda(x-d_{u,v})^m(x-c_{u,v})^m$, this is a positive bimodal polynomial of degree $2\ell-1$, with both critical points of order $\ell$.

The critical-value map is the identity, and the family extends continuously to $\overline\Delta_*$ in $C^1$. Thus it is a full family. Moreover, reflection exchanges $\alpha$ and $\beta$, giving
\[
P_{-v,-u}(-x)=-P_{u,v}(x).
\]
For $u=-v=a$, we have $\alpha=\beta$, so $P_{a,-a}$ is odd, with critical points $-b_a,b_a$, where $b_a=(1+2\alpha)^{-1}$. The finite approximation and limiting argument in Proposition~\ref{prop:existence-symmetric-fibonacci}, with the critical points now depending on the parameter, gives a symmetric Fibonacci map in this family.

These polynomials have negative Schwarzian derivative away from their critical points. For $0<a<1$, the derivative $P_{a,-a}'$ is strictly increasing on $(b_a,1)$, so
\[
P_{a,-a}'(-1)=P_{a,-a}'(1)>
\frac{P_{a,-a}(1)-P_{a,-a}(b_a)}{1-b_a}
=\frac{1+a}{1-b_a}>1.
\]
Thus both endpoints are repelling. For this Fibonacci parameter, Singer's theorem \cite[Theorem~II.6.1]{MS} therefore excludes attracting periodic orbits. There are also no wandering intervals.

\section{Semi-conjugacy}

Assume that $f\in\mathscr B^+$ is a symmetric Fibonacci bimodal map. The case when $f \in \mathscr B^-$ can be deduced to this case. For $k\geq1$, let
\[
U^0\supset U^1\supset U^2\supset\cdots\supset\{d\}, V^0\supset V^1\supset V^2\supset\cdots\supset\{c\}
\]
be its twin principal nest, and let $\phi_k$ denote the first return map to $U^{k-1}\cup V^{k-1}$. Recall that
\[
\phi_k=
\begin{cases}
f^{S_k},&\text{on }U^k\cup V^k,\\
f^{S_{k-1}},&\text{on }D^k\cup C^k.
\end{cases}
\]
Since $f$ is odd, the fixed point $p\in(d,c)$ is $p=0$. Moreover, by symmetry,
\[
U^k=-V^k, D^k=-C^k
\]
for every $k\geq1$.

Let
\[
I^0:=V^0, I^1:=V^1, J^1:=C^1,
\]
and define $g:I^1\cup J^1\to I^0$ by
\[
g=-\phi_1.
\]
More precisely,
\[
g=
\begin{cases}
-f^2,&\text{on }I^1,\\
-f,&\text{on }J^1.
\end{cases}
\]
By Fact~2.1, $\phi_1$ is of type $\mathcal A^{++}$. Hence $\phi_1(V^1)\subset U^0$ and $\phi_1(C^1)=U^0$, so that $g(I^1)\subset I^0$ and $g(J^1)=I^0$. Moreover, $g|_{I^1}$ has a unique local maximum at $c$, while $g|_{J^1}$ is monotone and onto. Since $f$ is odd,
\[
g(c)=-f^2(c)=f^2(d)\in C^1=J^1.
\]
Thus $g:I^1\cup J^1\to I^0$ is a box mapping with critical point $c$.

Write
\[
U^n=(u_n,\hat u_n), V^n=(\hat v_n,v_n),
\]
where $u_n<d<c<v_n$. By symmetry,
\[
u_n=-v_n, \hat u_n=-\hat v_n.
\]
In particular, $\hat u_0=\hat v_0=p=0$ and $u_0=-v_0$. Define
\[
\pi:U^0\cup V^0\to I^0,
\pi(x)=
\begin{cases}
-x,&x\in U^0,\\
x,&x\in V^0.
\end{cases}
\]
Then $\pi$ is piecewise linear and two-to-one. The following lemma shows that $g$ is semi-conjugate to the restriction of $\phi_1$ to $D^1\cup U^1\cup V^1\cup C^1$.

\begin{lem}
The following diagram commutes:
\[
\begin{CD}
D^1\cup U^1\cup V^1\cup C^1 @>{\phi_1}>> U^0\cup V^0\\
@V{\pi}VV @VV{\pi}V\\
I^1\cup J^1 @>{g}>> I^0.
\end{CD}
\]
Equivalently,
\[
\pi\circ\phi_1=g\circ\pi
\]
on $D^1\cup U^1\cup V^1\cup C^1$.
\end{lem}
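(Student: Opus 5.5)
Since $\phi_1$ is a first return map of $f$ to the symmetric set $U^0\cup V^0$, the plan is to derive everything from oddness of $f$ and the type $\mathcal A^{++}$ description of $\phi_1$ from Fact~2.1. The key observation is that $\phi_1$ itself is odd: since $f(-x)=-f(x)$ and $U^0=-V^0$, a point $x$ first returns to $U^0\cup V^0$ at time $n$ if and only if $-x$ does, and then $f^n(-x)=-f^n(x)$. Thus the domain of $\phi_1$ is symmetric and $\phi_1(-x)=-\phi_1(x)$. In particular, $U^1=-V^1$ and $D^1=-C^1$ (as already recorded), and the return times agree on corresponding domains ($f^2$ on $U^1\cup V^1$, $f$ on $D^1\cup C^1$).

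Next I use the type $\mathcal A$ description to locate images. We have $\phi_1(V^1)\subset U^0$ and $\phi_1(C^1)=U^0$, and, by the definition of type $\mathcal A$, $\phi_1(U^1)\subset V^0$ and $\phi_1(D^1)=V^0$. Then I check the diagram on each piece. For $x\in V^1\cup C^1$ we have $\pi(x)=x$ and $\phi_1(x)\in U^0$, so $\pi(\phi_1(x))=-\phi_1(x)=g(x)=g(\pi(x))$, which is just the definition $g=-\phi_1$. For $x\in U^1\cup D^1$ we have $\pi(x)=-x\in V^1\cup C^1=I^1\cup J^1$ and $\phi_1(x)\in V^0$, so
\[
\pi(\phi_1(x))=\phi_1(x)=-\phi_1(-x)=g(-x)=g(\pi(x)),
\]
using oddness of $\phi_1$. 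This also shows that $\pi$ maps $D^1\cup U^1\cup V^1\cup C^1$ onto $I^1\cup J^1$, so the diagram is well-formed.

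The only point needing care is the point $0=p$, which is an endpoint of both $U^0$ and $V^0$, so $\pi$ is well defined on the open set $U^0\cup V^0$. Beyond that, the main obstacle is justifying that the image inclusions $\phi_1(U^1)\subset V^0$ and $\phi_1(D^1)=V^0$ hold exactly as in type $\mathcal A^{++}$. I would get these either directly from Fact~2.1 or by reflecting the inclusions for $V^1$ and $C^1$ through oddness, which gives them independently.
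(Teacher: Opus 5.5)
Your proposal is correct and is essentially the paper's argument. The paper checks the four pieces $U^1$, $V^1$, $D^1$, $C^1$ separately, using the oddness of $f^2$ and $f$ together with the type $\mathcal A^{++}$ image locations; your observation that $\phi_1$ is odd just packages those four checks into two cases.
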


\begin{proof}
For $x\in U^1$, we have $\pi(x)=-x\in V^1$ and $\phi_1(x)=f^2(x)\in V^0$. Since $f^2$ is odd,
\[
g(\pi(x))=-f^2(-x)=f^2(x)=\pi(\phi_1(x)).
\]
For $x\in V^1$, we have $\pi(x)=x$ and $\phi_1(x)=f^2(x)\in U^0$, and hence
\[
g(\pi(x))=-f^2(x)=\pi(\phi_1(x)).
\]
For $x\in D^1$, we have $\pi(x)=-x\in C^1$ and $\phi_1(x)=f(x)\in V^0$. Therefore
\[
g(\pi(x))=-f(-x)=f(x)=\pi(\phi_1(x)).
\]
Finally, for $x\in C^1$, we have $\pi(x)=x$ and $\phi_1(x)=f(x)\in U^0$, so
\[
g(\pi(x))=-f(x)=\pi(\phi_1(x)).
\]
This proves the lemma.
\end{proof}

For each $k\geq1$, set
\[
I^k:=V^k, J^k:=C^k.
\]
Then, by symmetry,
\[
\pi(U^k)=\pi(V^k)=I^k, \pi(D^k)=\pi(C^k)=J^k.
\]
In the following we shall show that $g$ is an infinitely Fibonacci renormalizable box mapping in the sense of Definition 2.2.

By Fact~2.1 and induction, for every $k\geq2$,
\[
\phi_k|_{U^k\cup V^k}=\phi_1^{S_{k-1}},
\phi_k|_{D^k\cup C^k}=\phi_1^{S_{k-2}}.
\]
Indeed, this is clear for $k=2$, and the induction step follows from
\[
\phi_k|_{U^k\cup V^k}=\phi_{k-1}^2,
\phi_k|_{D^k\cup C^k}=\phi_{k-1},
\]
together with $S_{k-1}=S_{k-2}+S_{k-3}$.

\begin{lem}
For every $k\geq1$, the following hold:
\begin{enumerate}
\item[(1)] $g^{S_k}(I^{k+1})\subset I^k$ and $g^{S_k}(\partial I^{k+1})\subset\partial I^k$;
\item[(2)] $g^{S_{k-1}}:J^{k+1}\to I^k$ is monotone and onto. In particular, $g^{S_{k-1}}(\partial J^{k+1})=\partial I^k$;
\item[(3)] $g^{S_k}|_{I^{k+1}}$ has a unique local extremum at $c$, with $g^{S_k}(c)\in J^{k+1}$. Moreover, $g^{S_k}(I^{k+1})\supset I^{k+1}$.
\end{enumerate}
\end{lem}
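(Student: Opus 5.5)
The plan is to transport every statement about $\phi_{k+1}$ through the semi-conjugacy of Lemma 4.1. Iterating $\pi\circ\phi_1=g\circ\pi$ needs some care. For $x$ whose first $n-1$ iterates under $\phi_1$ stay in $D^1\cup U^1\cup V^1\cup C^1$, we get $\pi\circ\phi_1^n=g^n\circ\pi$. This applies to points of $U^{k+1}\cup V^{k+1}$ with $n=S_k$, and to points of $D^{k+1}\cup C^{k+1}$ with $n=S_{k-1}$. Indeed, $\phi_{k+1}=\phi_1^{S_k}$ on $U^{k+1}\cup V^{k+1}$ and $\phi_{k+1}=\phi_1^{S_{k-1}}$ on $D^{k+1}\cup C^{k+1}$, by the formula established just before the lemma (trivially so for $k=1$). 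Since these are first return maps to $U^k\cup V^k\subset U^0\cup V^0$, each intermediate iterate is an iterate of $\phi_1$ on a point of $U^0\cup V^0$, and so lies in the domain of $\phi_1$. I also need these intermediate points to lie in $D^1\cup U^1\cup V^1\cup C^1$ rather than in some other return domain of $U^0\cup V^0$. I would argue this by induction on $k$ using the decomposition $\phi_{k+1}=\phi_k^2$ or $\phi_k$ from Fact 2.1(3): orbits of points in the critical and post-critical domains at level $k+1$ pass only through critical and post-critical domains at level $k$. Unwinding down to level $1$ shows that only $U^1,V^1,D^1,C^1$ are visited. This bookkeeping is the step I expect to be the main obstacle. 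The cleanest route is the induction that $\phi_{k+1}$ restricted to $U^{k+1}\cup V^{k+1}\cup D^{k+1}\cup C^{k+1}$ factors through $\phi_k$ restricted to $U^k\cup V^k\cup D^k\cup C^k$, which holds because $\phi_k(U^{k+1})$ and the like land in the level-$k$ domains. This in turn follows from the type descriptions $\mathcal A,\mathcal B,\mathcal C$, which say where images of critical and post-critical domains go.

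With the iterated conjugacy in hand, I restrict to $V^{k+1}$, where $\pi$ is the identity, and get $g^{S_k}=\pi\circ\phi_{k+1}$ on $I^{k+1}=V^{k+1}$. Since $\phi_{k+1}(V^{k+1})\subset U^k\cup V^k$ and $\pi(U^k)=\pi(V^k)=I^k$, this gives $g^{S_k}(I^{k+1})\subset I^k$. As a first return domain, $V^{k+1}$ has its boundary mapped by $\phi_{k+1}$ into $\partial(U^k\cup V^k)$, which $\pi$ sends to $\partial I^k$. This proves (1). For (2), on $J^{k+1}=C^{k+1}\subset V^0$ we have $g^{S_{k-1}}=\pi\circ\phi_{k+1}$. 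The branch $\phi_{k+1}|_{C^{k+1}}$ is monotone onto $U^k$ or $V^k$ (by the type), and $\pi$ restricted to either of these is an affine bijection onto $I^k$. Hence the composition is monotone and onto, and boundary maps to boundary.

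For (3), $\phi_{k+1}|_{V^{k+1}}$ is a critical branch with a unique turning point at $c$: the critical branch $f^{S_k}$ has only $c$ as a critical point in the return domain, since intermediate iterates avoid $d$ and $c$ by first-return and non-centrality. Composing with the piecewise isometry $\pi$ on a single image component preserves this, because the image of $V^{k+1}$ lies in one of $U^k$, $V^k$. Next, by combinatorial symmetry, $\phi_{k+1}(c)$ lies in $D^{k+1}$ or $C^{k+1}$, and $\pi$ of either is $J^{k+1}$. Hence $g^{S_k}(c)=\pi(\phi_{k+1}(c))\in J^{k+1}$. Finally, Fact 2.1(2) gives $\phi_{k+1}(V^{k+1})\supset$ the critical domain ($U^{k+1}$ or $V^{k+1}$) on the same side as its image, using connectedness and the fact that $\phi_{k+1}(V^{k+1})$ is contained in one of $U^k$, $V^k$. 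Applying $\pi$ then yields $g^{S_k}(I^{k+1})\supset I^{k+1}$.
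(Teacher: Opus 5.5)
Your proposal is correct and is essentially the paper's proof: each item comes from pushing the corresponding property of $\phi_{k+1}$ through the iterated semi-conjugacies $g^{S_k}\circ\pi=\pi\circ\phi_{k+1}$ on $U^{k+1}\cup V^{k+1}$ and $g^{S_{k-1}}\circ\pi=\pi\circ\phi_{k+1}$ on $D^{k+1}\cup C^{k+1}$. Your check that the intermediate $\phi_1$-iterates stay in $D^1\cup U^1\cup V^1\cup C^1$ makes explicit what the paper compresses into ``iterating the semiconjugacy relation''; the cleanest justification is connectedness rather than the type descriptions, since $\phi_k(U^{k+1})$ is a connected subset of the domain of $\phi_k$ containing $\phi_k(d)$ and so lies in $D^k$ or $C^k$. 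Two minor points: the critical branch of $\phi_{k+1}$ on $V^{k+1}$ is $f^{S_{k+1}}$, not $f^{S_k}$; and for the covering in (3) the paper avoids your which-side analysis by using both critical domains at once, $g^{S_k}(I^{k+1})=\pi\bigl(\phi_{k+1}(U^{k+1}\cup V^{k+1})\bigr)\supset\pi(U^{k+1}\cup V^{k+1})=I^{k+1}$.
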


\begin{proof}
By the preceding observation,
\[
\phi_{k+1}|_{U^{k+1}\cup V^{k+1}}=\phi_1^{S_k},
\phi_{k+1}|_{D^{k+1}\cup C^{k+1}}=\phi_1^{S_{k-1}}.
\]
Iterating the semiconjugacy relation gives
\[
g^{S_k}\circ\pi=\pi\circ\phi_{k+1}
\]
on $U^{k+1}\cup V^{k+1}$. Hence
\[
g^{S_k}(I^{k+1})
=\pi\bigl(\phi_{k+1}(U^{k+1}\cup V^{k+1})\bigr)
\subset I^k.
\]
Moreover,
\[
g^{S_k}(\partial I^{k+1})
=\pi\bigl(\phi_{k+1}(\partial U^{k+1}\cup\partial V^{k+1})\bigr)
\subset\partial I^k.
\]
By Fact~2.1,
\[
\phi_{k+1}(U^{k+1}\cup V^{k+1})
\supset U^{k+1}\cup V^{k+1},
\]
and therefore $g^{S_k}(I^{k+1})
\supset I^{k+1}$.

Similarly,
\[
g^{S_{k-1}}\circ\pi=\pi\circ\phi_{k+1}
\]
on $D^{k+1}\cup C^{k+1}$. Since the restrictions of $\phi_{k+1}$ to $D^{k+1}$ and $C^{k+1}$ are monotone and map these intervals onto $U^k$ and $V^k$ in the order determined by the type, it follows that $g^{S_{k-1}}:J^{k+1}\to I^k$ is monotone and onto. In particular, $g^{S_{k-1}}(\partial J^{k+1})=\partial I^k$.

Finally, $\phi_{k+1}|_{U^{k+1}}$ and $\phi_{k+1}|_{V^{k+1}}$ have unique local extrema at $d$ and $c$, respectively. Since $\pi(d)=\pi(c)=c$, these two extrema project to the unique local extremum $c$ of $g^{S_k}|_{I^{k+1}}$. Furthermore,
\[
g^{S_k}(c)
=\pi\bigl(\phi_{k+1}(\{d,c\})\bigr)
\in\pi(D^{k+1}\cup C^{k+1})
=J^{k+1}.
\]
This proves the lemma.
\end{proof}

Denote $g_1=g$, and for each $k\geq2$ define
\[
g_k=
\begin{cases}
g^{S_{k-1}},&\text{on }I^k,\\
g^{S_{k-2}},&\text{on }J^k.
\end{cases}
\]

\begin{lem}
For each $k\geq1$, the following diagram commutes:
\[
\begin{CD}
D^k\cup U^k\cup V^k\cup C^k @>{\phi_k}>> U^{k-1}\cup V^{k-1}\\
@V{\pi}VV @VV{\pi}V\\
I^k\cup J^k @>{g_k}>> I^{k-1}.
\end{CD}
\]
Equivalently,
\[
\pi\circ\phi_k=g_k\circ\pi
\]
on $D^k\cup U^k\cup V^k\cup C^k$.
\end{lem}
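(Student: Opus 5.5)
The plan is to prove the commutation $\pi\circ\phi_k=g_k\circ\pi$ by iterating the base case $k=1$ (the first lemma of this section), using the identities established just before the preceding lemma:
\[
\phi_k|_{U^k\cup V^k}=\phi_1^{S_{k-1}},\qquad \phi_k|_{D^k\cup C^k}=\phi_1^{S_{k-2}},\qquad k\ge2.
\]
For $k=1$ there is nothing to do, since $g_1=g$ and that lemma is exactly the statement. For $k\ge2$ I will treat the two kinds of domains separately.

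\emph{Critical domains.} Take $x\in U^k\cup V^k$. The orbit $x,\phi_1(x),\dots,\phi_1^{S_{k-1}-1}(x)$ stays in the domain $D^1\cup U^1\cup V^1\cup C^1$ of $\phi_1$. This is because $\phi_1^{S_{k-1}}$ is a composition of first return maps: the intermediate points are returns of $x$ to $U^0\cup V^0$. Each such return passes through $U^1\cup V^1\cup D^1\cup C^1$, since by Fact~2.1(3) the branches of $\phi_k$ are built from $\phi_{k-1}$ restricted to $U^{k-1}\cup V^{k-1}\cup D^{k-1}\cup C^{k-1}$.

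To make this precise, I will argue by induction on $k$ and prove a slightly stronger statement: $\pi\circ\phi_k=g_k\circ\pi$ on $D^k\cup U^k\cup V^k\cup C^k$, and also $\phi_k$ maps $U^k\cup V^k$ into $U^{k-1}\cup V^{k-1}$. The inductive step has two parts.
\begin{itemize}
\item On $U^{k+1}\cup V^{k+1}$ we have $\phi_{k+1}=\phi_k^2$. Here $x\in U^{k+1}\cup V^{k+1}\subset U^k\cup V^k$, and $\phi_k(x)$ lies in the domain of $\phi_k$ restricted to $D^k\cup U^k\cup V^k\cup C^k$. This holds because $\phi_k(x)$ must return again, and the return domains of $U^{k-1}\cup V^{k-1}$ meeting the image $\phi_k(U^{k+1}\cup V^{k+1})$ are exactly $D^k,C^k$ (or $U^k,V^k$); this follows from the type description and from $\phi_{k+1}|_{U^{k+1}}=\phi_k^2$ being a single smooth branch. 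Applying the induction hypothesis twice then gives
\[
\pi\phi_{k+1}=g_k^{(2)}\pi,
\]
where the first factor of $g_k^{(2)}$ is $g^{S_{k-1}}$ on $I^{k+1}\subset I^k$ and the second is $g^{S_{k-2}}$ on $J^k$. Since $S_{k-1}+S_{k-2}=S_k$, the composite is $g^{S_k}=g_{k+1}$ on $I^{k+1}$.
\item On $D^{k+1}\cup C^{k+1}$ we have $\phi_{k+1}=\phi_k$, and $\pi(D^{k+1}\cup C^{k+1})=J^{k+1}\subset I^k$. The induction hypothesis on $U^k\cup V^k$ gives $\pi\phi_k=g^{S_{k-1}}\pi$, which is $g_{k+1}$ on $J^{k+1}$.
\end{itemize}

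\emph{The main obstacle.} In the first bullet, I must verify that $\pi(\phi_k(x))\in J^k$ when $x\in U^{k+1}\cup V^{k+1}$. Here I use that the return time of $\phi_{k+1}$ on $U^{k+1}$ is $S_k+S_{k-1}=S_{k+1}$. That return is $\phi_k$ on $U^k$ (time $S_k$) followed by a branch of time $S_{k-1}$, and the only time-$S_{k-1}$ branches of $\phi_k$ are $D^k\cup C^k$ by Fact~2.1(3). Hence $\phi_k(x)\in D^k\cup C^k$, and therefore $\pi\phi_k(x)\in J^k$ by symmetry, since $\pi(D^k)=\pi(C^k)=J^k$.

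A simpler alternative, which I would write first, avoids tracking domains and uses only the $k=1$ lemma together with $\phi_k=\phi_1^{S_{k-1}}$. The intermediate iterates $\phi_1^j(x)$ lie in $D^1\cup U^1\cup V^1\cup C^1$ because they are successive returns of the recurrent nest dynamics. Iterating the first lemma then gives $\pi\phi_1^{n}=g^{n}\pi$ directly, and the identification $g_k=g^{S_{k-1}}$ (respectively $g^{S_{k-2}}$) is by definition. In this version the main obstacle is justifying that each intermediate iterate stays in the domain of $\phi_1$, and I would handle it with the inductive argument above.
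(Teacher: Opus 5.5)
\textbf{The flawed step.} The one step that fails as written is your resolution of the ``main obstacle''. You assert that, by Fact~2.1(3), $D^k$ and $C^k$ are the only return domains of $\phi_k$ with return time $S_{k-1}$. Fact~2.1(3) only gives the return time on $D^k\cup C^k$. It says nothing about uniqueness, and uniqueness already fails for $k=1$:
\begin{itemize}
\item $f$ maps $U^0$ onto $(0,f(d)]$ with a maximum at $d$.
\item $f(d)\notin V^0$, because $d$ has return time $S_1=2$.
\item Hence $U^0\cap f^{-1}(V^0)$ has two components, one on each side of $d$. Each is a return domain with return time $S_0=1$, and only one of them is $D^1$.
\end{itemize}

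\textbf{The repair.} The conclusion you need is still true, and it follows from connectivity rather than return times. Since $\phi_{k+1}=\phi_k^2$ on $U^{k+1}$, the interval $\phi_k(U^{k+1})$ lies in the open domain of $\phi_k$. It therefore lies in a single return domain of $U^{k-1}\cup V^{k-1}$. It contains $\phi_k(d)$, which lies in $D^k\cup C^k$ by the definition of the post-critical domains. Thus $\phi_k(U^{k+1})\subset D^k$ or $\phi_k(U^{k+1})\subset C^k$. The same argument with $\phi_k(c)$ handles $V^{k+1}$.

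Two smaller corrections:
\begin{itemize}
\item Drop the parenthetical ``or $U^k,V^k$''. Points of $U^{k+1}$ first return to $U^k\cup V^k$ only at time $S_{k+1}>S_k$.
\item At $k=1$, the factor ``$g^{S_{k-2}}$ on $J^k$'' must be read as $g_1=g$, since $S_{-1}$ is undefined.
\end{itemize}

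\textbf{Comparison with the paper.} With that repair your argument is correct, and it takes a slightly different route from the paper's. The paper's proof is exactly your ``simpler alternative''. It writes $\phi_k=\phi_1^{S_{k-1}}$ on $U^k\cup V^k$ and $\phi_k=\phi_1^{S_{k-2}}$ on $D^k\cup C^k$, and iterates $\pi\circ\phi_1=g\circ\pi$. In doing so it tacitly assumes that every intermediate iterate $\phi_1^j(x)$ stays in $D^1\cup U^1\cup V^1\cup C^1$, where the $k=1$ lemma applies.

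Your induction on $k$ uses $\phi_{k+1}=\phi_k^2$ and $\phi_{k+1}=\phi_k$ directly and never tracks the full $\phi_1$-orbit. It isolates the only nontrivial input: the inclusion $\phi_k(U^{k+1}\cup V^{k+1})\subset D^k\cup C^k$ at each level. Unwound down to level $1$, that same inclusion is what justifies the paper's two-line computation. The paper's version is shorter. Yours makes the domain bookkeeping explicit, and with it the well-definedness of the compositions defining $g_k$.
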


\begin{proof}
For $k=1$, this is the previous semiconjugacy lemma. Let $k\geq2$. On $U^k\cup V^k$ we have $\phi_k=\phi_1^{S_{k-1}}$, and hence
\[
\pi\circ\phi_k
=\pi\circ\phi_1^{S_{k-1}}
=g^{S_{k-1}}\circ\pi
=g_k\circ\pi.
\]
Similarly, on $D^k\cup C^k$, $\phi_k=\phi_1^{S_{k-2}}$, so
\[
\pi\circ\phi_k
=\pi\circ\phi_1^{S_{k-2}}
=g^{S_{k-2}}\circ\pi
=g_k\circ\pi.
\]
Thus the diagram commutes.
\end{proof}

\begin{prop}
For every $k\geq1$, $g_k$ is Fibonacci renormalizable and
\[
\mathcal I g_k=g_{k+1}.
\]
Hence $g$ is infinitely Fibonacci renormalizable.
\end{prop}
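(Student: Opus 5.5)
The plan is to transport every statement about $g_k$ to the corresponding statement about $\phi_k$ through the semi-conjugacy $\pi\circ\phi_k=g_k\circ\pi$ (the last lemma). We then read off the required properties from Fact~2.1 and the preceding lemma on $g^{S_k}$, $g^{S_{k-1}}$. First we check that $g_k\in\mathcal F$ with $I_0^1=I^k$, $I_1^1=J^k$, $I_0^0=I^{k-1}$. Disjointness and containment follow because $\pi$ restricted to $V^{k-1}$ is the identity, and $V^k, C^k$ are disjoint return domains inside $V^{k-1}$. The preceding lemma (with $k$ replaced by $k-1$; for $k=1$ the type $\mathcal A^{++}$ discussion already done) gives two facts: $g_k|_{J^k}$ is monotone onto $I^{k-1}$, and $g_k|_{I^k}$ has a unique extremum at $c$ with boundary mapped into $\partial I^{k-1}$.

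Next we verify Fibonacci renormalizability of $g_k$. Condition (1) is $g_k(I^k)\supset I^k$, which is part (3) of the preceding lemma, i.e.\ the projection of Fact~2.1(2). For condition (2) we argue as follows.
\begin{itemize}
\item $g_k(c)=\pi(\phi_k(c))\in\pi(D^k\cup C^k)=J^k$, hence $g_k(c)\notin I^k$.
\item By Fact~2.1(3), $\phi_{k+1}|_{U^{k+1}\cup V^{k+1}}=\phi_k^2$ and $c\in V^{k+1}$, so $\phi_k^2(c)=\phi_{k+1}(c)\in U^k\cup V^k$. Projecting gives $g_k^2(c)\in I^k$.
\item For $g_k^3(c)$ we use Fact~2.1(3) at level $k+2$: $\phi_k^3(c)=\phi_{k+1}(\phi_{k+1}(c))$ restricted as $\phi_{k+2}$ on the critical domain. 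Since $\phi_{k+1}(c)\in D^{k+1}\cup C^{k+1}$ and $\phi_{k+1}=\phi_k$ there, we get $\phi_k^3(c)=\phi_{k+2}(c)\in U^{k+1}\cup V^{k+1}\subset U^k\cup V^k$.
\end{itemize}
Projecting the last item gives $g_k^3(c)\in I^k$.

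Then we identify $\mathcal I g_k$. We show that the first return map of $g_k$ to $I^k$ equals $g_k^2=g_{k+1}$ on $I^{k+1}$ and equals $g_k=g_{k+1}$ on $J^{k+1}$, and that $I^{k+1}$, $J^{k+1}$ are exactly the return domains containing $c$ and $g_k^2(c)$. The key point is that for $y=\pi(x)$ with $x\in U^k\cup V^k$, the return time of $y$ under $g_k$ to $I^k=\pi(U^k\cup V^k)$ equals the return time of $x$ under $\phi_k$ to $U^k\cup V^k$. This holds because $\pi^{-1}(I^k)=U^k\cup V^k$ and the semi-conjugacy holds along orbits, so $g_k^j(y)\in I^k$ if and only if $\phi_k^j(x)\in U^k\cup V^k$. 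Moreover, the first return of $\phi_k$ to $U^k\cup V^k$ coincides with the first return $\phi_{k+1}$ of $f$, since $\phi_k$ is itself a first return map to a larger set. Consequently $\pi$ maps return domains of $\phi_{k+1}$ in $U^k\cup V^k$ onto return domains of $g_k$ in $I^k$, and maps $V^{k+1}$, $C^{k+1}$ onto the components containing $c$ and $g_k^2(c)=\pi(\phi_{k+1}(c))$. Hence $\mathcal I g_k=\pi$-conjugate of $\phi_{k+1}$ restricted there, which is $g_{k+1}$ by the last lemma. Induction on $k$ then gives $\mathcal I^n g=g_{n+1}$, and each of these is Fibonacci renormalizable, so $g$ is infinitely Fibonacci renormalizable.

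The main obstacle is the return-domain identification. We need the components of the domain of $R_{I^k}$ for $g_k$ to be exactly the $\pi$-images of components for $\phi_{k+1}$, including that maximal intervals of constant return time project to maximal ones. We will handle it by using that $\pi$ is a homeomorphism from each of $U^{k-1}$ and $V^{k-1}$ onto $I^{k-1}$. Restricted to $V^{k-1}$, which contains both $V^{k+1}$ and $C^{k+1}$, $\pi$ is the identity, so maximality transfers directly.
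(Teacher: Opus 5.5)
Your proposal is correct and follows essentially the paper's route: you transport Fibonacci renormalizability and the identity $\mathcal I g_k=g_{k+1}$ through $\pi\circ\phi_k=g_k\circ\pi$, using $\phi_{k+1}=\phi_k^2$ on $U^{k+1}\cup V^{k+1}$ and $\phi_{k+1}=\phi_k$ on $D^{k+1}\cup C^{k+1}$ from Fact~2.1. Your treatment of $g_k^3(c)$ via $\phi_{k+2}$ and of the maximality of return domains is more explicit than the paper's. One overstatement needs correcting: the semi-conjugacy holds only on $D^k\cup U^k\cup V^k\cup C^k$, so not every return domain of $\phi_{k+1}$ in $U^k\cup V^k$ projects to a return domain of $g_k$, because orbits passing through other return domains of $\phi_k$ leave the box. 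The argument still goes through, since you only need two facts, both of which your reasoning supplies: each $g_k$-return domain lies in a $\phi_{k+1}$-return domain, and $V^{k+1}$ and $C^{k+1}$ are themselves $g_k$-return domains.
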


\begin{proof}
We prove the statement by induction on $k$. For $k=1$, by the construction of $g$ and Fact~2.1, $g(I^1)\supset I^1$. Moreover, $g(c)\in J^1$, so $g(c)\notin I^1$. Since
\[
\phi_2|_{U^2\cup V^2}=\phi_1^2,
\phi_2|_{D^2\cup C^2}=\phi_1,
\]
the semiconjugacy gives $g^2(c)\in J^2\subset I^1$. Since the post-critical branches of $\phi_2$ map $D^2$ and $C^2$ onto $U^1$ and $V^1$ in the appropriate order, we also have $g^3(c)\in I^1$.  Thus
\[
g(I^1)\supset I^1,
g(c)\notin I^1,
g^2(c),g^3(c)\in I^1,
\]
and hence $g=g_1$ is Fibonacci renormalizable.

Furthermore, $I^2$ is the central domain of the first return map to $I^1$. Indeed, $I^2=\pi(U^2)=\pi(V^2)$, where $U^2$ and $V^2$ are the central return domains of $\phi_1$ to $U^1\cup V^1$, and
\[
\phi_2|_{U^2\cup V^2}=\phi_1^2.
\]
Similarly, since $g^2(c)\in J^2$ and
\[
J^2=\pi(D^2)=\pi(C^2),
\]
where $D^2$ and $C^2$ are the corresponding post-critical return domains, $J^2$ is the return domain containing $g^2(c)$. The relations
\[
\phi_2|_{U^2\cup V^2}=\phi_1^2,
\phi_2|_{D^2\cup C^2}=\phi_1
\]
and the semiconjugacy imply
\[
g_2|_{I^2}=g_1^2|_{I^2},
g_2|_{J^2}=g_1|_{J^2}.
\]
Therefore $\mathcal I g_1=g_2$. 

Now assume that, for some $k\geq1$, $g_k=\mathcal I^{k-1}g$. By Fact~2.1,
\[
\phi_{k+1}|_{U^{k+1}\cup V^{k+1}}=\phi_k^2,
\phi_{k+1}|_{D^{k+1}\cup C^{k+1}}=\phi_k.
\]
Using the commutative diagrams for $\phi_k$ and $\phi_{k+1}$, we obtain
\[
g_{k+1}|_{I^{k+1}}=g_k^2|_{I^{k+1}},
g_{k+1}|_{J^{k+1}}=g_k|_{J^{k+1}}.
\]
Moreover, $I^{k+1}$ is the central domain of the first return map to $I^k$, while $J^{k+1}$ is the return domain containing $g_k^2(c)$. Hence $g_k$ is Fibonacci renormalizable and
\[
\mathcal I g_k=g_{k+1}.
\]
Therefore
\[
g_{k+1}=\mathcal I^k g.
\]
The result follows by induction.
\end{proof}

\begin{proof}[Proof of the Main Theorem]
It suffices to consider $f\in\mathscr B^+$. Indeed, if $f\in\mathscr B^-$, put $\widehat f=-f$. Since $f$ is odd, $\widehat f^{\,n}(x)=(-1)^n f^n(x)$. Thus $\widehat f$ is a positive symmetric Fibonacci bimodal map with the same critical orders. By Fact~2.1 and symmetry, the two maps have the same critical $\omega$-limit set $A$, with $A=-A$. Hence $\operatorname{dist}(\widehat f^{\,n}(x),A)=\operatorname{dist}(f^n(x),A)$ for every $n\geq0$, so ${\rm Rel}_{\widehat f}(A)={\rm Rel}_f(A)$. The conclusion for $\widehat f$ therefore implies the conclusion for $f$.

We shall use the box mapping $g$ constructed above.
Since $f(c)$ is not a critical point, $f$ is a local diffeomorphism near $f(c)$. Thus $g=-f^2$ near $c$ has a non-flat critical point of order $\ell$. By the preceding proposition and the construction of $g$, the map $g$ satisfies the hypotheses of Theorem~1. Let $\ell_0$ be as in Theorem~1 and assume that $\ell\geq\ell_0$. Then
\[
A_g:=\omega_g(c)
\]
is a minimal Cantor set and
\[
B_g:={\rm Rel}_g(A_g)
\]
has positive Lebesgue measure. Set
\[
\widetilde B:=\pi^{-1}(B_g).
\]
Since $\pi$ is linear with slope $\pm1$ on $U^0$ and $V^0$, we have ${\rm Leb}(\widetilde B)>0$.

Let
\[
A_1:=\omega_{\phi_1}(c)=\omega_{\phi_1}(d).
\]
We claim that $\pi(A_1)=A_g$ and that $\pi|_{A_1}:A_1\to A_g$ is two-to-one. Since
\[
\pi\circ\phi_1=g\circ\pi, \pi(c)=c,
\]
we have $\pi(\phi_1^n(c))=g^n(c)$ for every $n\geq0$. Taking $\omega$-limits gives
\[
\pi(A_1)=A_g.
\]
Moreover, $\phi_1$ is odd and $d=-c$, so
\[
-A_1=-\omega_{\phi_1}(c)=\omega_{\phi_1}(d)=A_1.
\]
Since $0\notin A_1$ and $\pi$ identifies precisely the two points $x$ and $-x$ in $U^0\cup V^0$, it follows that $\pi|_{A_1}$ is two-to-one. In particular,
\[
\pi^{-1}(A_g)=A_1.
\]

Now let $y\in\widetilde B$. By the semiconjugacy,
\[
\pi\bigl(\omega_{\phi_1}(y)\bigr)\subset\omega_g(\pi(y))\subset A_g.
\]
Hence
\[
\omega_{\phi_1}(y)\subset\pi^{-1}(A_g)=A_1,
\]
and therefore
\[
\widetilde B\subset{\rm Rel}_{\phi_1}(A_1).
\]

Since the return time of $\phi_1$ is either $1$ or $2$, we have
\[
A:=\omega_f(c)=A_1\cup f(A_1).
\]
For every $y\in\widetilde B$,
\[
\omega_f(y)\subset\omega_{\phi_1}(y)\cup f\bigl(\omega_{\phi_1}(y)\bigr)
\subset A_1\cup f(A_1)=A.
\]
Hence
\[
\widetilde B\subset{\rm Rel}_f(A),
\]
and consequently
\[
{\rm Leb}({\rm Rel}_f(A))\geq{\rm Leb}(\widetilde B)>0.
\]
Since $A=\omega_f(c)=\omega_f(d)$ is a minimal Cantor set and is not solenoidal, $A$ is a wild Cantor attractor of $f$. This completes the proof.
\end{proof}


\begin{thebibliography}{aa}


\bibitem{A}
L. Alvin. {\it Uniformly recurrent sequences and minimal Cantor omega-limit sets.} Fund. Math. {\bf 231}(3) (2015), 273--284.

\bibitem{AC}
L. Alvin, J. \v{C}in\v{c}. {\it Recurrence, symbolic dynamics, and wild attractors for unimodal maps.} 	arXiv:2606.02308.

\bibitem{BL}
A. Blokh, M. Lyubich. {\it Attractors of maps of the interval.} Banach Center Publ. {\bf 23} (1989), 427--442.

\bibitem{BM}
A. Blokh, M. Misiurewicz. {\it Wild attractors of polymodal negative Schwarzian maps.} Comm. Math. Phys. {\bf 192}(2) (1998), 397--416.

\bibitem{BH}
B. Branner, J. H. Hubbard. {\it The iteration of cubic polynomials I.} Acta Math. {\bf 160} (1988), 143--206.

\bibitem{B}
H. Bruin. {\it Topological conditions for the existence of absorbing Cantor sets.} Trans. Amer. Math. Soc. {\bf 350} (1998), 2229--2263.

\bibitem{B1}
H. Bruin. {\it Combinatorics of the kneading map.} Internat. J. Bifur. Chaos Appl. Sci. Engrg. {\bf 5}(5) (1995), 1339--1349.

\bibitem{BKNS}
H. Bruin, G. Keller, T. Nowicki, S. van Strien. {\it Wild Cantor attractors exist.} Ann. of Math. {\bf 143} (1996), 97--130.

\bibitem{Bu}
X. Buff. {\it Fibonacci fixed point of renormalization.} Ergodic Theory Dynam. Systems {\bf 20} (2000), 1287--1317.


\bibitem{GvS}
R. Galeeva, S. van Strien. {\it Which families of $\ell$-modal maps are full?} Trans. Amer. Math. Soc. {\bf 348}(8) (1996), 3215--3221.

\bibitem{GSS}
J. Graczyk, D. Sands, G. \'Swi\c atek. {\it Metric attractors for smooth unimodal maps.} Ann. of Math. {\bf 159} (2004), 725--740.


\bibitem{HK}
F. Hofbauer, G. Keller. {\it Some remarks on recent results about S-unimodal maps.} Ann. Inst. H. Poincar\'e Phys. Th\'eor. {\bf 53} (1990), 413--425.


\bibitem{LSw}
G. Levin, G. \'Swi\c atek. {\it Limit drift.} Ergodic Theory Dynam. Systems {\bf 37}(8) (2017), 2643--2670.


\bibitem{LW}
S. Li, Q. Wang. {\it A new class of generalized fibonacci unimodal maps.} Nonlinearity {\bf 27}
(2014), 1633--1643.

\bibitem{LS}
S. Li, W. Shen. {\it On unimodal maps with critical order $2+\epsilon$.} Fund. Math. {\bf 192} (2006), 77--86.

\bibitem{JL}
H. Ji, S. Li. {\it The Attractor of Fibonacci-like Renormalization Operator.} Acta Math. Sin. (Engl. Ser.) {\bf 36} (2020), 1256--1278.

\bibitem{JL2}
H. Ji, S. Li. {\it On the Combinatorics of Fibonacci-Like Non-renormalizable Maps.} Commun. Math. Stat. {\bf 8} (2020), 473--496.

\bibitem{JM}
H. Ji, W. Ma. {\it Decay of geometry for a class of cubic polynomials.} Commun. Math. Stat. published online (2025). https://doi.org/10.1007/s40304-024-00413-6.

\bibitem{JM2}
H. Ji, W. Ma. {\it Phase transition for piecewise linear fibonacci bimodal map.} AIMS Math. {\bf 8}(4) (2023), 8403--8430.


\bibitem{KN}
G. Keller, T. Nowicki. {\it Fibonacci maps re(al)-visited.} Ergodic Theory Dynam. Systems {\bf 15} (1995), 99--120.

\bibitem{LM}
M. Lyubich, J. Milnor. {\it The Fibonacci unimodal map.} J. Amer. Math. Soc. {\bf 6}(2) (1993), 425--457.

\bibitem{Lyu}
M. Lyubich. {\it Combinatorics, geometry and attractors of quasi-quadratic maps.} Ann. of Math. {\bf
140} (1994), 347--404.

\bibitem{Mil}
J. Milnor. {\it On the concept of attractor.} Comm. Math. Phys. {\bf 99}(2) (1985), 177--195. Correction and remarks: Comm. Math. Phys. {\bf 102}(3) (1985), 517--519.
  
\bibitem{MS}
W. de Melo, S. van Strien. {\it  One-dimensional dynamics.}
Springer-Verlag, Berlin, 1993.

\bibitem{NvS}
T. Nowicki, S. van Strien. {\it Polynomial maps with a Julia set of positive measure.}  arXiv:math/9402215.

\bibitem{S}
W. Shen. {\it Decay of geometry for unimodal maps: an elementary proof.} Ann. of Math. {\bf 163}
(2006), 383--404.

\bibitem{Sm}
D. Smania. {\it Puzzle geometry and rigidity: The Fibonacci cycle is hyperbolic.} J. Amer. Math. Soc. {\bf 20}(3) (2007), 629--673.

\bibitem{SV}
G. \'Swi\c atek, E. Vargas. {\it Decay of geometry in the cubic family.} Ergodic Theory Dynam. Systems {\bf 18} (1998), 1311--1329.

\bibitem{vSV}
S. van Strien, E. Vargas. {\it Real bounds, ergodicity and negative Schwarzian for multimodal maps.} J. Amer. Math. Soc. {\bf 17}(4) (2004), 749--782. Erratum: J. Amer. Math. Soc. {\bf 20}(1) (2007), 267--268.

\bibitem{V}
E. Vargas. {\it Fibonacci bimodal maps.} Discrete Contin. Dyn. Syst. {\bf 22}(3) (2008), 807--815.

\bibitem{VY}
J. Olivares-Vinales, S. Yoo. {\it Towers and Bratteli-Vershik systems in Fibonacci-like unimodal maps.} 	arXiv:2602.21623.

\bibitem{Z}
R. Zhang. {\it New Fibonacci-like wild attractors for unimodal interval maps.} Ph.D. thesis, National University of Singapore, 2015.


\end{thebibliography}
\end{document}